\font\tencyr=wncyr10 \def\russe{\tencyr\cyracc} 
\def\Sha{\text{\russe{Sh}}}
\newtheorem{theorem}{Theorem}
\newtheorem{proposition}[theorem]{Proposition}
\newtheorem{lemma}[theorem]{Lemma}
\newtheorem{corollary}[theorem]{Corollary}
\newtheorem{conjecture}{Conjecture}
\renewcommand{\geq}{\geqslant}
\renewcommand{\leq}{\leqslant}
\newcommand{\Q}{\mathbb Q}
\newcommand{\N}{\mathbb N}
\newcommand{\F}{\mathbb F}
\newcommand{\R}{\mathbb R}
\newcommand{\Z}{\mathbb Z}
\newcommand{\C}{\mathbb C}
\newcommand\qbi[3]{{{#1}\atopwithdelims[]{#2}}_{#3}}
\newcommand{\la}{\lambda}
\newcommand{\ds}{\displaystyle}
\DeclareMathOperator{\cl}{C\ell}
\DeclareMathOperator{\rk}{rk}
\DeclareMathOperator{\disc}{disc}
\DeclareMathOperator{\sel}{Sel} 
\begin{document}  
    
\title{The Cohen-Lenstra heuristics, moments and $p^j$-ranks of some groups}

\author{Christophe Delaunay}
\address{Universit\'e de Franche-Comt\'e \\
Laboratoire de Math\'ematiques de Besan\c con, CNRS UMR UMR 6623 \\
Facult\'es des Sciences et Techniques \\ 
16 route de Gray \\
25030 Besan\c con  \\
France.}
\email{christophe.delaunay@univ-fcomte.fr}
\author{Fr\'ed\'eric Jouhet}
\address{Universit\'e de Lyon \\
CNRS  \\
Universit\'e Lyon 1 \\
Institut Camille Jordan \\
43, boulevard du 11 novembre 1918 \\
F-69622 Villeurbanne Cedex \\
France.}
\email{jouhet@math.univ-lyon1.fr}
\date{\today}

\begin{abstract}
This article deals with the coherence of the model given by the Cohen-Lenstra heuristic philosophy for class groups and also for their generalizations to 
Tate-Shafarevich groups. More precisely, our first goal is  to extend a previous result due to E. Fouvry and J. Kl\"uners which proves that a conjecture 
provided by the Cohen-Lenstra philosophy implies another such conjecture. As a consequence of our work, we can deduce, for example, a conjecture 
for the probability laws of $p^j$-ranks of Selmer groups of elliptic curves. This is compatible with some theoretical works and other classical 
conjectures.
\end{abstract}

\maketitle

%
%

\section{Introduction and notation}
\noindent
The Cohen-Lenstra heuristics and their generalizations to Tate-Shafarevich groups are models for formulating conjectures related to class groups of number fields and 
Tate-Shafarevich groups of elliptic curves varying in some natural families. This article deals with the coherence of 
the model. More precisely,  our aim is to prove that a conjecture provided by the Cohen-Lenstra philosophy implies another such conjecture. 
This work actually extends and generalizes an earlier one by E.~Fouvry and J.~Kl\"uners in~\cite{kluners_fouvry2} which deals with class groups; we will 
follow their presentation and adapt the main techniques of their proofs. \medskip
~\\ 
We will use the following notation.  The letter $d$ will denote a fundamental discriminant and $\cl(K_d)$ the class group associated to the quadratic 
number field $K_d=\Q(\sqrt{d})$. The letter $p$ will always denote a prime number. If $G$ is a finite abelian group, the $p^j$-rank of $G$ is defined by 
$\rk_{p^j}(G)=\dim_{\F_p} p^{j-1}G/p^jG$. For any real valued function $f$ defined over isomorphism classes of finite abelian groups, we say that 
$f(\cl(K_d))$ has average value $c_\pm \in \R$ if 
$$
\frac{\ds \sum_{0<\pm d<X} f(\cl(K_d))}{\ds \sum_{0<\pm d<X} 1}= c_\pm +o(1), \quad  \mbox{ as } X \rightarrow \infty.
$$
If $f$ is the characteristic function of some property, then $c_\pm$ is called the probability of the property (or the density of the set of the class groups 
satisfying it). 
~\\
The Cohen-Lenstra heuristics allow one to formulate conjectures for the average values of $f(\cl(K_d)_{odd})$ for ``reasonable'' functions $f$, where 
$\cl(K_d)_{odd}$ 
denotes the odd part of $\cl(K_d)$. Concerning 
the $2$-part of class groups, genus theory enables one to determine the structure 
of $\cl(K_d)[2]$. However Gerth (\cite{gerth_1}, \cite{gerth_2}) succeeded in generalizing the Cohen-Lenstra heuristics for the $4$-part of class groups, 
and also obtained conjectures for the average values of $f(\cl(K_d)^2)$ for reasonable functions $f$.\medskip
~\\
In particular, the above heuristics give on the one hand a prediction for the average value of the function 
$f(p^{k\rk_p(\cl(K_d)^2)})$ for all $k\in \N$, and on the other hand a prediction for the probability that 
$\rk_p(\cl(K_d)^2)=r$  for all $r\in \N$ (note that for odd $p$, $\rk_p(\cl(K_d)^2) = \rk_p(\cl(K_d))$). 
In \cite{kluners_fouvry2}, the authors proved that the former prediction implies the latter. The aim of this article is to generalize the results 
of \cite{kluners_fouvry2} in two directions. First, we will consider higher moments (including for example the average values of 
$f(p^{k\rk_{p^j}(\cl(K_d)^2)})$ 
for all positive integers $(k,j)$) and probability laws of $p^j$-ranks for $j \geq1 $. Secondly, we will obtain analogous results concerning heuristics on Tate-Shafarevich groups and on 
Selmer groups  of elliptic curves~(\cite{delaunay1, delaunay2, delaunay_jouhet}).\bigskip
~\\
We recall for $(a,q)\in \C^2$ with $|q|<1$ and  $k\in \Z$ the $q$-shifted factorial
\begin{equation*}
(a;q)_k:=\left\{\begin{array}{l}1\;\;\mbox{if}\;\;k=0\\ (1-a)\dots
(1-aq^{k-1})\;\;\mbox{if}\;\;k>0\\1/(1-aq^{-1})\dots
(1-aq^{k})\;\;\mbox{if}\;\;k<0,
\end{array}\right.
\end{equation*}
and $(a;q)_\infty:=\lim_{k\to+\infty}(a;q)_k$. Note that $(1/p;1/p)_k=\eta_k(p)$, where $\eta_k$ is the function defined in \cite{cohen-lenstra} and used in \cite{kluners_fouvry2}. We will also use 
the $q$-binomial coefficient 
$$
\qbi{n}{k}{q}:=\frac{(q;q)_n}{(q;q)_k(q;q)_{n-k}}\in\mathbb{N}[q].
$$
\medskip
~\\
A partition $\la:=(\la_1\geq\la_2\geq\cdots)$ of a nonnegative integer $n$ is a finite decreasing sequence of nonnegative integers 
whose sum is equal to $n$. If $\lambda$ is a partition of $n$, we write $|\lambda|=n$ and the notation 
$\lambda=1^{m_1}2^{m_2}\cdots \ell^{m_\ell}$ means that $m_i$ is the multiplicity of the integer $i$ in $\lambda$ (hence, we have 
$n=\lambda_1 + \lambda_2+\cdots = |\lambda|=m_1+2m_2+\cdots + \ell m_\ell$). If $\mu:=(\mu_1 \geq \mu_2\geq \cdots)$ is a second integer partition, 
then we define $(\lambda|\mu):=\sum_i \lambda_i \mu_i$ (we will often use the statistics $(\lambda | \lambda)=\sum_i \lambda_i^2$ which must not be 
mistaken for $|\lambda|^2=(\sum_i \lambda_i)^2$). Finally, the notation $\mu \subseteq \lambda$ means that $\mu_i \leq \lambda_i$ for all $i\geq1$.
\\
Recall that a finite abelian $p$-group $G$ has type $\lambda=1^{m_1}\cdots \ell^{m_\ell}$ if 
$$
G\simeq \left(\Z/p\Z\right)^{m_1} \oplus \cdots \oplus \left(\Z/p^\ell\Z\right)^{m_\ell}.
$$
If $\lambda=1^{m_1}\cdots \ell^{m_\ell}$ is an integer partition, we denote by $\lambda':=(\lambda'_1\geq \lambda'_2 \geq \cdots)$ its conjugate defined 
by $\lambda'_k=\sum_{j=k}^\ell m_j$ for all $k$. We have $|\lambda|=|\lambda'|$.  \medskip
~\\
As in \cite{delaunay_jouhet}, we denote by $C_{\lambda,\mu}(p)$ the number of subgroups of type $\mu$ in a finite
abelian $p$-group of type $\lambda$, which can be expressed by
\begin{equation}\label{coeff}
C_{\la,\,\mu}(p)=p^{\sum_{i\geq1}\mu'_{i+1}(\la'_i-\mu'_i)}
\,\prod_{i\geq1}\qbi{\la'_i-\mu'_{i+1}}{\la'_i-\mu'_i}{p},
\end{equation}
showing that it is a polynomial in the variable $p$, with positive integral coefficients.
\medskip
~\\
Using the Cohen-Lenstra philosophy (\cite{cohen-lenstra}) and a combinatorial analysis, we obtained the following conjecture (\cite[Conjecture 1]{delaunay_jouhet}).
\begin{conjecture}\label{moment_class_gp}
For any positive integer $\ell$, let $\lambda=1^{m_1}2^{m_2}\cdots \ell^{m_\ell}$ be an integer partition, and assume\footnote{In order to simplify 
the notations, we will exclude $p=2$ for class groups. However, one can obtain directly the case $p=2$ by  replacing  $\cl(K_d)$ by $\cl(K_d)^2$ all along the article.} 
that $p \geq 3$. As $d$ is varying over the set of fundamental negative discriminants, the average of 
$|\cl(K_d)[p]|^{m_1} |\cl(K_d)[p^2]|^{m_2} \cdots |\cl(K_d)[p^\ell]|^{m_\ell}$ is equal to
$$
\sum_{\mu \subseteq \la} C_{\la,\mu}(p),
$$
where the sum is over all integer partitions $\mu \subseteq \lambda$.~\\
Similarly, as $d$ is varying over the set of fundamental positive discriminants, the average of 
$|\cl(K_d)[p]|^{m_1} |\cl(K_d)[p^2]|^{m_2} \cdots |\cl(K_d)[p^\ell]|^{m_\ell}$ 
is equal to
$$
\sum_{\mu \subseteq \la} C_{\la,\mu}(p) p^{-|\mu|}.
$$
\end{conjecture}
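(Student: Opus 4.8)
The plan is to derive both averages from the Cohen--Lenstra probability measure on finite abelian $p$-groups by a purely combinatorial computation, treating the two cases uniformly through Delaunay's parameter $u$ (with $u=0$ for negative discriminants and $u=1$ for positive ones). The starting observation is that the quantity whose average we seek is nothing but a Hom-count: if $G_\mu$ denotes the finite abelian $p$-group of type $\mu$, then since $\Hom(\Z/p^i\Z,G)=G[p^i]$ and $\Hom$ turns direct sums into products,
$$
|\cl(K_d)[p]|^{m_1}\cdots|\cl(K_d)[p^\ell]|^{m_\ell}=|\Hom(G_\la,\cl(K_d))|,
$$
where $G_\la$ is the fixed group of type $\la=1^{m_1}\cdots\ell^{m_\ell}$. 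The Cohen--Lenstra philosophy asserts that the $p$-part of $\cl(K_d)$ is distributed according to $\mathrm{Prob}_u(G)=w_u/(|G|^u|\aut(G)|)$ with $w_u=\prod_{k>u}(1-p^{-k})$; writing $\mathbb E_u$ for the corresponding expectation, the sought average is $\mathbb E_u[|\Hom(G_\la,G)|]$.

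Next I would partition the homomorphisms by their image. Every $\phi\in\Hom(G_\la,G)$ surjects onto a subgroup of $G$, and grouping subgroups by their isomorphism type $\mu$ gives
$$
|\Hom(G_\la,G)|=\sum_{\mu}|\mathrm{Sur}(G_\la,G_\mu)|\cdot N_\mu(G),\qquad N_\mu(G):=\#\{K\leq G:K\simeq G_\mu\}.
$$
Averaging term by term reduces the problem to computing $\mathbb E_u[N_\mu(G)]$, the expected number of subgroups of type $\mu$. Writing $N_\mu(G)=|\hominj(G_\mu,G)|/|\aut(G_\mu)|$ and using that finite abelian $p$-groups are self-dual, together with the invariance of $\mathrm{Prob}_u$ under $G\mapsto\hat G$, one sees that injections into the random group are exchanged with surjections out of it; the standard Cohen--Lenstra moment $\mathbb E_u[|\mathrm{Sur}(G,A)|]=|A|^{-u}$ then yields
$$
\mathbb E_u[N_\mu(G)]=\frac{p^{-u|\mu|}}{|\aut(G_\mu)|}.
$$

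It remains to reassemble the pieces. Substituting the last display gives the average as $\sum_\mu |\mathrm{Sur}(G_\la,G_\mu)|\,p^{-u|\mu|}/|\aut(G_\mu)|$. Here $|\mathrm{Sur}(G_\la,G_\mu)|/|\aut(G_\mu)|$ is exactly the number of quotients of $G_\la$ of type $\mu$, and by the symmetry of Hall's subgroup counts the number of quotients of type $\mu$ equals the number of subgroups of type $\mu$, namely $C_{\la,\mu}(p)$ (which is nonzero precisely when $\mu\subseteq\la$). This turns the average into $\sum_{\mu\subseteq\la}C_{\la,\mu}(p)\,p^{-u|\mu|}$, the claimed value: the factor $p^{-u|\mu|}$ is $1$ for $u=0$ (negative discriminants) and equals $p^{-|\mu|}$ for $u=1$ (positive discriminants).

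I expect the genuine obstacle to be the second step, the evaluation of $\mathbb E_u[N_\mu(G)]$. The self-duality argument makes it conceptually transparent, but one must still justify the surjection-moment identity $\mathbb E_u[|\mathrm{Sur}(G,A)|]=|A|^{-u}$ for the measure $\mathrm{Prob}_u$, and this is where the $q$-series apparatus set up above enters: in the direct approach one uses $|\Hom(G_\la,G_\nu)|=p^{(\la'|\nu')}$ and must sum $p^{(\la'|\nu')-u|\nu|}/|\aut(G_\nu)|$ over all partitions $\nu$, the convergence and explicit evaluation being controlled by $q$-shifted factorials and $q$-binomial identities. By contrast, the first and third steps (partitioning by image, and the subgroup/quotient symmetry) are formal once one has the interpretation of $C_{\la,\mu}(p)$ as a subgroup count recorded in~\eqref{coeff}.
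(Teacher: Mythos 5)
There is a fundamental gap here, and it is not one of detail: the statement you are trying to prove is an \emph{open conjecture}, and your argument does not (and cannot, by this route) prove it. Conjecture~\ref{moment_class_gp} is an arithmetic assertion about the actual averages of $|\cl(K_d)[p]|^{m_1}\cdots|\cl(K_d)[p^\ell]|^{m_\ell}$ as $d$ runs over fundamental discriminants ordered by size. Your proof begins by asserting that ``the Cohen--Lenstra philosophy asserts that the $p$-part of $\cl(K_d)$ is distributed according to $\mathrm{Prob}_u(G)=w_u/(|G|^u|\aut(G)|)$'' and then computes an expectation against that measure. But the claim that class groups are so distributed is precisely the unproven heuristic; it is not a theorem one may invoke. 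What you have written is therefore a derivation of the conjecture \emph{from} the Cohen--Lenstra hypothesis, i.e.\ a motivation for its statement, not a proof of it. Indeed the paper treats this statement as Conjecture~\ref{moment_class_gp} (quoted from Delaunay--Jouhet, where it was obtained by essentially the computation you describe: Hom-counts, decomposition by image, expected subgroup numbers via the surjection moment $\mathbb{E}_u[|\mathrm{Sur}(G,A)|]=|A|^{-u}$, and the subgroup/quotient duality giving $C_{\la,\mu}(p)$), and the paper explicitly records how little of it is known: only the case $p=3$, $\la=1^1$ (Davenport--Heilbronn) and, for $\cl(K_d)^2$ with $p=2$, the partitions $\la=1^{m_1}$ (Fouvry--Kl\"uners). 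A genuine proof of even these special cases requires heavy arithmetic input (counting cubic fields via class field theory, or analytic work on $4$-ranks), none of which appears, or could appear, in a purely measure-theoretic computation.

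To be clear about what is salvageable: your combinatorial chain
$$
|\Hom(G_\la,G)|=\sum_{\mu}|\mathrm{Sur}(G_\la,G_\mu)|\,N_\mu(G),\qquad
\mathbb{E}_u[N_\mu(G)]=\frac{p^{-u|\mu|}}{|\aut(G_\mu)|},\qquad
\frac{|\mathrm{Sur}(G_\la,G_\mu)|}{|\aut(G_\mu)|}=C_{\la,\mu}(p),
$$
is correct as a computation of the $u$-average of the moment function against the Cohen--Lenstra measure (modulo the justification of the surjection-moment identity and of exchanging the expectation with the infinite sums, which you defer). That computation is exactly how Conjecture~\ref{moment_class_gp} was formulated, and within this paper its role is only ever that of a \emph{hypothesis}: Theorem~\ref{class-group} assumes the moments of Conjecture~\ref{moment_class_gp} and deduces the rank laws of Conjecture~\ref{rk_laws_gp}. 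So your write-up would be acceptable as an exposition of the heuristic behind the conjecture, but presenting it as a proof conflates the model with the theorem the model is meant to predict.
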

\medskip
~\\
Concerning the probability laws of $p^j$-ranks $\rk_{p^j}(\cl(d))$, the following conjecture comes naturally from \cite[Corollary 11]{delaunay3}.
\begin{conjecture}\label{rk_laws_gp}
Let $\ell$ be a positive integer and $\mu:=\mu_1\geq \mu_2\geq \cdots \geq \mu_\ell \geq 0$ a partition of length $\ell(\mu) \leq \ell$ (i.e., $\mu_{\ell+1}=0$). Assume that
$p\geq 3$. Then, as $d$ is varying over 
the set of fundamental negative discriminants, the probability that $\rk_{p^j}(\cl(K_d))=\mu_j$ for all $1\leq j \leq \ell$ is equal to
$$
\frac{(1/p^{\mu_\ell+1};1/p)_\infty}{p^{\mu_1^2+\cdots + \mu_\ell^2} \prod_{j=1}^{\ell} (1/p;1/p)_{\mu_j-\mu_{j+1}}}.
$$
Moreover, as $d$ is varying over the set of fundamental positive discriminants, the probability that $\rk_{p^j}(\cl(K_d))=\mu_j$ for all $1\leq j \leq \ell$ is equal to
$$
\frac{(1/p^{\mu_\ell+2};1/p)_\infty}{p^{\mu_1^2+\cdots + \mu_\ell^2+(\mu_1+\cdots + \mu_\ell)} \prod_{j=1}^{\ell} (1/p;1/p)_{\mu_j-\mu_{j+1}}}.
$$
\end{conjecture}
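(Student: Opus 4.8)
The plan is to derive Conjecture~\ref{rk_laws_gp} from the moment values of Conjecture~\ref{moment_class_gp}, generalizing to $p^j$-ranks the inversion argument of Fouvry--Kl\"uners. Throughout, $\mathbb{E}[\,\cdot\,]$ denotes the average value over $d$ in the sense of the introduction. The first step is to recast the moments as homomorphism counts: for a finite abelian $p$-group $G$ one has $|G[p^j]|=|G/p^jG|=\#\Hom(G,\Z/p^j\Z)$, so, writing $A_\la:=\bigoplus_j(\Z/p^j\Z)^{m_j}$ for the group of type $\la$,
\[
|\cl(K_d)[p]|^{m_1}\cdots|\cl(K_d)[p^\ell]|^{m_\ell}=\#\Hom(\cl(K_d),A_\la).
\]
Factoring each homomorphism through its image gives $\#\Hom(G,A_\la)=\sum_{\tau\subseteq\la}C_{\la,\tau}(p)\,\#\mathrm{Sur}(G,A_\tau)$, where $C_{\la,\tau}(p)$ is the number of subgroups of $A_\la$ of type $\tau$. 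The matrix $(C_{\la,\tau}(p))$ is triangular for $\subseteq$ with unit diagonal, so a straightforward induction on $|\la|$ shows that Conjecture~\ref{moment_class_gp} is equivalent to the clean surjection-moment statement
\[
\mathbb{E}[\#\mathrm{Sur}(\cl(K_d),A)]=|A|^{-u}\qquad\text{for every finite abelian }p\text{-group }A,
\]
with $u=0$ for negative and $u=1$ for positive discriminants. I would carry this parameter $u$ throughout, as it is exactly what distinguishes the two formulas of Conjecture~\ref{rk_laws_gp}.

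The second step is to invert these moments into the joint law of the $p^j$-ranks. Specializing $A=(\Z/p\Z)^j$ gives $\mathbb{E}[\qbi{\rk_p(\cl(K_d))}{j}{p}]=p^{-uj}/|\mathrm{GL}_j(\F_p)|$, and in the case $\ell=1$ the Gaussian ($q$-binomial) inversion
\[
\prob[\rk_p=r]=\sum_{j\ge r}(-1)^{j-r}p^{\binom{j-r}{2}}\qbi{j}{r}{p}\,\mathbb{E}\Big[\qbi{\rk_p(\cl(K_d))}{j}{p}\Big]
\]
reproduces the one-variable formula after a $q$-Pochhammer simplification; this is the Fouvry--Kl\"uners model case. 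For the general statement I would set up the $\ell$-variable analogue. Since $\rk_{p^j}(G)=\nu'_j$ when $G$ has type $\nu$, fixing $\rk_{p^j}=\mu_j$ for $1\le j\le\ell$ prescribes the first $\ell$ columns of $\nu$, and the required probability is obtained by a multivariate Möbius--Gaussian inversion of the surjection moments followed by a summation over all admissible tails $\nu'_{\ell+1}\ge\nu'_{\ell+2}\ge\cdots$ with $\nu'_{\ell+1}\le\mu_\ell$. I expect this tail summation to collapse, via the $q$-binomial theorem, to the factor $(1/p^{\mu_\ell+u+1};1/p)_\infty$, while the bookkeeping of the first $\ell$ columns produces $p^{-(\mu_1^2+\cdots+\mu_\ell^2)-u(\mu_1+\cdots+\mu_\ell)}\prod_{j=1}^\ell(1/p;1/p)_{\mu_j-\mu_{j+1}}^{-1}$, giving both formulas at once.

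The final step is to justify the legitimacy of the inversion, that is, that the moments determine the law uniquely. Here the surjection moments are bounded by $1$ and the target probabilities decay like $p^{-\sum_j\mu_j^2}$, so the alternating inversion sums converge absolutely and the multivariate moment problem is determinate; this super-exponential decay is the analytic point requiring care. The main obstacle I anticipate is the combinatorial heart of the second step: establishing the precise multivariate $q$-series identity that simultaneously realizes the inversion and evaluates the tail in closed form, namely (with $r_{\ell+1}:=0$)
\[
\sum_{r_1\ge\cdots\ge r_\ell\ge0}\frac{(1/p^{r_\ell+1};1/p)_\infty\,p^{\sum_i\la'_i r_i}}{p^{\sum_j r_j^2}\prod_{j=1}^{\ell}(1/p;1/p)_{r_j-r_{j+1}}}=\sum_{\mu\subseteq\la}C_{\la,\mu}(p),
\]
together with its $u=1$ counterpart. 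Finally, the Tate--Shafarevich and Selmer cases should go through verbatim: one only replaces the class-group surjection moments by their $S$-decorated analogues $\ms$, $\mszero$, $\msun$ of the Cohen--Lenstra--Delaunay model and re-runs the same inversion, the sole changes being the value of $u$ and the normalizing $q$-Pochhammer factor.
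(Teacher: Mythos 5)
A preliminary remark: the statement you were asked about is a conjecture; what can actually be proved --- and what the paper proves (Theorem~\ref{class-group}, via the more general Theorem~\ref{general_clgp}) --- is the implication from the moment conjecture (Conjecture~\ref{moment_class_gp}) to the rank-law conjecture, and your proposal is indeed such a conditional derivation. Parts of it are sound: the passage from $\Hom$-moments to surjection moments, $\mathbb{E}[\#\mathrm{Sur}(\cl(K_d),A)]=|A|^{-u}$, is a finite unitriangular relation and hence legitimate; and the multivariate $q$-identity you isolate at the end is exactly the right one --- it is the statement that the conjectured law solves the infinite system \eqref{systeme} that the paper sets up, a fact the paper does not reprove but imports from \cite{delaunay_jouhet} (Theorem 14 there). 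So that identity is a gap only in the sense that you leave it unproved while it is available in the literature.

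The genuine gap is the step you dispatch in one sentence: that bounded surjection moments plus absolute convergence of the inversion series make the moment problem ``determinate''. This is precisely the hard analytic point, and it is the reason Fouvry and Kl\"uners needed a complex-analysis lemma at all (they do \emph{not} argue by the inversion you attribute to them). The hypothesis gives, for each fixed target group $A$, a relation $\sum_r \frac{N(X,r)}{N(X)}\, s_A(r)=|A|^{-u}+o_A(1)$, where $s_A(r)$ counts surjections onto $A$ from a group with $p^i$-ranks $r_i$, and the error term is \emph{not uniform in $A$}; your inversion expresses the probability as an infinite sum over $A$ of these relations, so passing to the limit $X\to\infty$ term by term requires a bound on $N(X,r)/N(X)$ that is uniform in $X$ and decays rapidly in $r$ --- whereas the decay you invoke is that of the conjectural limiting law, which is not available at that stage. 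The paper manufactures the needed bound by applying the moment hypothesis to the partitions with exponents $2m_i+1$ (inequality \eqref{inegalite_sha}), extracts limit points $d_r$ by compactness, passes to the limit by dominated convergence, and then --- the heart of the proof --- shows that the system \eqref{systeme} has at most one nonnegative solution. That uniqueness does not come for free from bounded moments: it is proved via the decay $x_r\ll p^{-(r|r)/2}$, obtained from nonnegativity by taking $\lambda=r$ in the system together with Proposition~\ref{estimation_1}, and then via Corollary~\ref{coro_2}, the multivariate generalization of Lemma 6 of \cite{kluners_fouvry2}: an entire function $\sum_r w_r z_1^{r_1}\cdots z_\ell^{r_\ell}$ vanishing at all points $(p^{m_1},\dots,p^{m_\ell})$ and whose coefficients satisfy $|w_r|\ll p^{-(r|r)/2+\alpha|r|}$ with $\alpha<1/2$ is identically zero. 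Nothing in your proposal plays the role of this lemma (nor of any substitute, such as a truncation or bracketing argument), so the inversion step is unjustified as written; supplying it would essentially reconstruct the paper's proof.
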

\medskip
~\\
Very few is known about these conjectures. Davenport and Heilbronn (\cite{davenport_heilbronn}) proved Conjecture \ref{moment_class_gp} for 
$p=3$ and $\lambda=1^1$. In \cite{kluners_fouvry}, the authors proved Conjecture~\ref{moment_class_gp} for $p=2$ (replacing $\cl(K_d)$ 
by $\cl(K_d)^2$) and any $\lambda=1^{m_1}$.
~\\
The conjectures mentioned in the introduction coming  from the seminal work of \cite{cohen-lenstra} and studied 
in \cite{kluners_fouvry, kluners_fouvry2} correspond to Conjecture~\ref{moment_class_gp} for $\lambda = 1^{m_1}$ and 
Conjecture~\ref{rk_laws_gp} for $\ell=1$. More precisely, if $\lambda=1^{m}$, Conjecture~\ref{moment_class_gp} says that the average 
of $|\cl(K_d)[p]|^m$ for class groups of imaginary 
(resp. real) quadratic fields is equal to (with the notations of \cite{delaunay_jouhet})
$$
M_0(x^{1^n}) = \sum_{k=0}^n \qbi{n}{k}{p} \quad \left(\mbox{resp. } M_1(x^{1^n}) = \frac{1}{p}\sum_{k=0}^n \qbi{n}{k}{p} \right) 
$$
Those correspond to $\mathcal{N}(n,p)$ (resp. $\mathcal{M}_n(p)$) used in \cite{kluners_fouvry, kluners_fouvry2}. Fouvry and Kl\"uners proved in 
\cite{kluners_fouvry2} that if Conjecture~\ref{moment_class_gp} is true with  $\lambda=1^m$ for all $m$, then Conjecture~\ref{rk_laws_gp} is true 
with $\ell=1$ for all $\mu_1\geq 0$. We will adapt their proof and use a result in \cite{delaunay_jouhet} to simplify one of its step in order to obtain the following generalization.
\begin{theorem}\label{class-group} Assume that $p\geq 3$. Let $\ell$ be a positive integer and assume that for any $\lambda=1^{m_1}2^{m_2}\cdots \ell^{m_\ell}$, as $d$ is varying over 
the set of fundamental negative discriminants,
the average of $|\cl(K_d)[p]|^{m_1} |\cl(K_d)[p^2]|^{m_2} \cdots |\cl(K_d)[p^\ell]|^{m_\ell}$ is equal to $\sum_{\mu \subseteq \la} C_{\la,\mu}(p)$. Then for any 
$\mu_1\geq \mu_2 \geq \cdots \geq \mu_\ell\geq0$, as  $d$ is varying over the set of fundamental negative discriminants, the probability that 
$\rk_{p^j}(\cl(K_d))=\mu_j$ for all $1\leq j \leq \ell$ is equal to
$$
\frac{(1/p^{\mu_\ell+1};1/p)_\infty}{p^{\mu_1^2+\cdots + \mu_\ell^2} \prod_{j=1}^{\ell} (1/p;1/p)_{\mu_j-\mu_{j+1}}}.
$$
Furthermore, assume that for any positive integer $\ell$ and any $\lambda=1^{m_1}2^{m_2}\cdots \ell^{m_\ell}$, as $d$ is varying over 
the set of fundamental positive discriminants, the average of 
$|\cl(K_d)[p]|^{m_1} |\cl(K_d)[p^2]|^{m_2} \cdots |\cl(K_d)[p^\ell]|^{m_\ell}$ is equal to $\sum_{\mu \subseteq \la} C_{\la,\mu}(p) p^{-|\mu|}$. Then for any 
$\mu_1\geq \mu_2 \geq \cdots \geq \mu_\ell$, as $d$ is varying over 
the set of fundamental positive discriminants, the probability that 
$\rk_{p^j}(\cl(K_d))=\mu_j$ for all $1\leq j \leq \ell$ is equal to
$$
\frac{(1/p^{\mu_\ell+2};1/p)_\infty}{p^{\mu_1^2+\cdots + \mu_\ell^2+(\mu_1+\cdots + \mu_\ell)} \prod_{j=1}^{\ell} (1/p;1/p)_{\mu_j-\mu_{j+1}}}.
$$
\end{theorem}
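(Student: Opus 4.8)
The plan is to reformulate both the hypothesis and the conclusion in terms of the \emph{type} $\la$ of the $p$-Sylow subgroup of $\cl(K_d)$, and then to invert an explicit moment relation, following the scheme of \cite{kluners_fouvry2} but now in $\ell$ variables. First I would record the two elementary translations between $p^j$-ranks, orders of torsion subgroups, and the type. If the $p$-part of $\cl(K_d)$ has type $\la=1^{m_1}\cdots\ell^{m_\ell}$, then for every $j\ge1$ one has $\rk_{p^j}(\cl(K_d))=\la'_j=\sum_{i\ge j}m_i$ and $|\cl(K_d)[p^j]|=p^{\la'_1+\cdots+\la'_j}$; both follow at once from the definition of $\rk_{p^j}$ and of the conjugate partition. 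Consequently the event ``$\rk_{p^j}(\cl(K_d))=\mu_j$ for all $1\le j\le\ell$'' is exactly the event ``$\la'_j=\mu_j$ for $1\le j\le\ell$'', i.e. the first $\ell$ columns of the Young diagram of $\la$ are prescribed while parts of size $>\ell$ remain free.

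Writing $N_k:=\rk_{p^k}(\cl(K_d))=\la'_k$, the monomial whose average is supplied by the hypothesis becomes
$$
|\cl(K_d)[p]|^{m_1}\cdots|\cl(K_d)[p^\ell]|^{m_\ell}=p^{\sum_{k=1}^\ell N_k\,(m_k+\cdots+m_\ell)}.
$$
Thus, setting $t_k:=m_k+\cdots+m_\ell$ (an arbitrary weakly decreasing sequence in $\N^\ell$ as $(m_i)$ ranges over $\N^\ell$), the hypothesis gives, for negative discriminants, the value $\sum_{\nu\subseteq\la}C_{\la,\nu}(p)$ of the average of $p^{\sum_{k=1}^\ell t_k N_k}$, where $\nu$ denotes the subgroup-type index of \eqref{coeff}. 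Writing $P(\mu)$ for the sought probability that $(N_1,\dots,N_\ell)=(\mu_1,\dots,\mu_\ell)$, and recalling that $p^{\sum_k t_k N_k}$ depends only on $(N_1,\dots,N_\ell)$, the whole content of the theorem is the inversion of the linear relation
$$
\sum_{\mu_1\ge\cdots\ge\mu_\ell\ge0}P(\mu)\,p^{\sum_{k=1}^\ell t_k\mu_k}=\sum_{\nu\subseteq\la}C_{\la,\nu}(p),\qquad\text{valid for all weakly decreasing }(t_k).
$$
Note that the case $\la=\emptyset$ (all $t_k=0$) already encodes the normalization $\sum_\mu P(\mu)=1$.

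The heart of the proof is this purely combinatorial inversion, and I would organize the computation through the auxiliary $u$-deformed functionals $\m,\mzero,\mun$ so that the negative- and positive-discriminant cases are produced simultaneously and then specialized. The verification that the candidate
$$
P(\mu)=\frac{(1/p^{\mu_\ell+1};1/p)_\infty}{p^{\mu_1^2+\cdots+\mu_\ell^2}\prod_{j=1}^\ell(1/p;1/p)_{\mu_j-\mu_{j+1}}}\qquad(\mu_{\ell+1}=0)
$$
satisfies the displayed identity is where the result of \cite{delaunay_jouhet} enters: it supplies a generating-function evaluation of $\sum_{\nu\subseteq\la}C_{\la,\nu}(p)$ in terms of $q$-shifted factorials and $q$-binomial coefficients, which collapses the right-hand side and lets one match the two sides by iterated use of the $q$-binomial theorem, one column $\mu_j-\mu_{j+1}$ at a time. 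The factors $(1/p;1/p)_{\mu_j-\mu_{j+1}}$ in $P(\mu)$ and the $q$-binomials in \eqref{coeff} are exactly what make this telescoping run, while the quadratic weight $p^{\mu_1^2+\cdots+\mu_\ell^2}$ should emerge from the prefactor of \eqref{coeff} after summing over the free columns of size $>\ell$.

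I expect the main obstacle to be precisely this multivariate inversion, for two reasons. First one must prove \emph{determinacy}: that a law on weakly decreasing sequences is pinned down by the moments attached only to weakly decreasing $(t_k)$. For this I would peel off the first coordinate by taking $t_2=\cdots=t_\ell=0$, so that the one-variable moments $\sum_r P(N_1=r)\,p^{t_1 r}$ determine the law of $N_1=\rk_p$ exactly as in \cite{kluners_fouvry2}, and then induct on the remaining coordinates via the weakly decreasing profiles $(t_1,\dots,t_j,0,\dots,0)$, which recover the joint law of the partial sums $(N_1,N_1+N_2,\dots)$ and hence of $(N_1,\dots,N_\ell)$. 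Second, the $q$-series identity above is the genuinely new computation beyond the case $\ell=1$ of \cite{kluners_fouvry2}: controlling the resulting $\ell$-fold alternating sum, and checking that the matched distribution does sum to $1$, requires the Cauchy/Durfee-square type identities underlying \eqref{coeff}. Once the negative-discriminant identity is established, the positive-discriminant statement follows by the same inversion applied to the weighted averages $\sum_{\nu\subseteq\la}C_{\la,\nu}(p)\,p^{-|\nu|}$, the weight $p^{-|\nu|}$ producing both the shift $\mu_\ell+1\mapsto\mu_\ell+2$ and the additional factor $p^{-(\mu_1+\cdots+\mu_\ell)}$ in the second formula.
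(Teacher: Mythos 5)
Your overall route is the same as the paper's: translate $p^j$-ranks into the type of the $p$-Sylow subgroup, encode the hypothesis as an infinite linear system $\sum_{\mu} x_\mu p^{(\la'|\mu)}=\sum_{\nu\subseteq\la}C_{\la,\nu}(p)p^{-u|\nu|}$ indexed by weakly decreasing profiles, note that the claimed formula is one solution, and prove that the system admits at most one solution by a variable-by-variable reduction to the one-dimensional argument of \cite{kluners_fouvry2}. (One economy you miss: the verification that the candidate solves the system is not re-derived in the paper by $q$-binomial manipulations; it is quoted directly from \cite{delaunay_jouhet}, Theorem 14.) However, the two steps that carry the analytic weight of the proof are absent from your proposal, and they are genuine gaps rather than routine details.

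First, you write that ``the whole content of the theorem is the inversion of the linear relation'' for the probabilities $P(\mu)$. This presupposes both that the limits $P(\mu)=\lim_X N(X,\mu)/N(X)$ exist --- which is part of the conclusion, not of the hypothesis --- and that the limit in $X$ can be interchanged with the infinite sum over $\mu$, so as to convert the finite-level identities with $o_\la(1)$ errors into an exact moment identity for a limiting law. The paper handles this by extracting limit points $d_\mu\in[0,1]$ along subsequences, proving the growth estimate $\sum_{\nu\subseteq\la}C_{\la,\nu}(p)p^{-u|\nu|}=O_{p,\ell}\bigl(p^{(\la'|\la')/2}\bigr)$ (Proposition~\ref{estimation_1}), and applying the hypothesis with each $m_i$ replaced by $2m_i+1$ to obtain a domination of $N(X,\mu)/N(X)$ uniform in $X$ and $\mu$, which legitimizes Lebesgue's dominated convergence theorem; the existence of the limit is then deduced only at the end, from uniqueness of the solution. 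Second, your determinacy step is asserted rather than proved: since the moments grow superexponentially, no general multivariate moment-determinacy theorem applies, and knowing the marginal law of $N_1$ together with the joint moments does not formally ``recover the joint law of the partial sums.'' What makes uniqueness work in the paper are two concrete ingredients your induction does not supply: (i) the a priori decay $0\leq x_\mu\leq c_0\,p^{-(\mu|\mu)/2}$ for \emph{any} nonnegative solution, obtained by comparing the single term $x_\mu p^{(\la|\mu)}$ with the whole sum and taking $\la=\mu$ (again via Proposition~\ref{estimation_1}); and (ii) the quantitative vanishing statement Corollary~\ref{coro_2} --- an entire series $\sum_\mu w_\mu z_1^{\mu_1}\cdots z_\ell^{\mu_\ell}$ with $|w_\mu|\ll p^{-(\mu|\mu)/2}$ vanishing at the points $(p^{m_1},\dots,p^{m_\ell})$ is identically zero --- whose proof is exactly your ``peeling'' made rigorous: the one-variable lemma of Section 2 (with its threshold $\omega>\alpha-1/2$ checked against the coefficient bounds) applied to the inner sums, followed by a polynomial/Vandermonde argument in the remaining variables, which is what replaces your unproven inductive step. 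Your plan also glosses over the fact that the available evaluation profiles are only the weakly decreasing ones, so after fixing $(t_2,\dots,t_\ell)$ the variable $t_1$ only ranges over $t_1\geq t_2$; this restriction has to be argued around (for instance using that, for fixed $r_1$, the inner sums are polynomials of bounded degree). Until (i) and (ii) are in place, the heart of the theorem remains unproved.
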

\medskip
~\\
One can also adapt the Cohen-Lenstra heuristics for Tate-Shafarevich groups of elliptic curves. If $E$ is an elliptic curve defined over $\Q$, we denote by $\Sha(E)$ its Tate-Shafarevich group.
In this context, {\bf we assume 
in this article that $\Sha(E)[p^\infty]$ is finite for all elliptic curves $E/\Q$} (which is a classical conjecture). In that case, $\Sha(E)[p^\infty]$ is a group of 
type\footnote{This notion is different from the previously mentioned groups of type $\lambda$, where $\lambda$ is a partition.} S 
(i.e. it is endowed with a bilinear, alternating, non degenerate pairing $\beta \colon \Sha(E)[p^\infty] \times \Sha(E)[p^\infty] \rightarrow \Q/\Z$).
Let ${\mathcal F}_u$ be the family of 
elliptic curves $E$ defined over\footnote{One can replace in our discussion $\Q$ by any other number field $K$.} $\Q$ with rank $u$, ordered by 
their conductor (denoted by $N(E)$).  If $f$ is a real valued function defined over isomorphism classes of groups of type
 S (see \cite{delaunay1,delaunay2}), 
then we say that $f(\Sha(E))$ has average value $c \in \R$ for $E$ varying over ${\mathcal F}_u$ if  
$$
\sum_{\substack{E\in { \mathcal F}_u \\ N(E)<X}} f(\Sha(E))= (c+o(1)) \sum_{\substack{E\in { \mathcal F}_u \\ N(E)<X}} 1 , \quad  \mbox{ as } X \rightarrow \infty.
$$
If $f$ is the characteristic function of some property, we say that $c$ is the probability of this property (or the density of the set of Tate-Shafarevich  groups satisfying it) for $E$ 
varying over ${\mathcal F}_u$. We raised the following conjecture in \cite{delaunay_jouhet}.
\begin{conjecture}\label{moment_sha}
Let $\ell$ be a positive integer, let $\lambda=1^{m_1}2^{m_2}\cdots \ell^{m_\ell}$ be a partition, and let $u$ be a nonnegative integer. As $E/\Q$, 
ordered by conductors, is varying over ${\mathcal F}_u$, the average of 
$|\Sha(E)[p]|^{m_1} |\Sha(E)[p^2]|^{m_2} \cdots |\Sha(E)[p^\ell]|^{m_\ell}$ is equal to
$$
\sum_{\mu \subseteq \lambda} C_{\lambda,\mu}(p^2)p^{-|\mu|(2u-1)}.
$$
\end{conjecture}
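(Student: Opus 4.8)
The statement is a prediction of the Delaunay heuristic model, so the plan is to compute the model's average directly, in exact parallel with the derivation of Conjecture~\ref{moment_class_gp} for class groups, and to show that the moment operator $\ms$ takes the asserted value on the monomial $x^\la$. First I would rewrite the moment as a homomorphism count: writing $G_\la$ for the finite abelian $p$-group of type $\la$, the identity $|\Sha(E)[p^j]|=\#\Hom(\Z/p^j\Z,\Sha(E)[p^\infty])$ together with the fact that $\Hom$ out of a direct sum factorises gives
$$
|\Sha(E)[p]|^{m_1}\cdots|\Sha(E)[p^\ell]|^{m_\ell}=\#\Hom\!\left(G_\la,\Sha(E)[p^\infty]\right).
$$
By definition of the heuristic, the predicted average over $\mathcal F_u$ is $\ms(x^\la)=\left(\sum_G \#\Hom(G_\la,G)\,w_u(G)\right)\!\big/\!\left(\sum_G w_u(G)\right)$, the sums running over groups of type S weighted by Delaunay's rank-$u$ weight $w_u(G)=|G|^{1-u}/|\auts(G)|$. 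Thus the conjecture is equivalent to the closed-form evaluation $\ms(x^\la)=\sum_{\mu\subseteq\la}C_{\la,\mu}(p^2)\,p^{-|\mu|(2u-1)}$.

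Next I would isolate the origin of the substitution $p\mapsto p^2$ and of the shifted parameter. Every group of type S is isomorphic, as an abelian group, to $H\oplus H$ for a finite abelian $p$-group $H$ (the alternating non-degenerate pairing forces each elementary divisor to occur with even multiplicity), and this gives a bijection between type-S groups and finite abelian $p$-groups $H$. Under $G=H\oplus H$ one has $|G|=|H|^2$ and $\#\Hom(G_\la,G)=\#\Hom(G_\la,H)^2$, so both the order and the $\Hom$-count are squared; simultaneously, Delaunay's explicit formula for $|\auts(H\oplus H)|$ is a symplectic-type order built from factors of the shape $(1-p^{-2k})$. The point is that, after substituting these three facts, the weighted sum over type-S groups reorganises into the ordinary class-group moment sum $\sum_{\mu\subseteq\la}C_{\la,\mu}(q)\,q^{-t|\mu|}$ in the variable $q=p^2$ with parameter $t=u-\tfrac12$; since $q^{-t|\mu|}=p^{-2(u-1/2)|\mu|}=p^{-(2u-1)|\mu|}$, this reproduces both the $C_{\la,\mu}(p^2)$ and the exponent in the claimed formula.

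To evaluate the resulting sum I would use the same combinatorial mechanism as in the class-group case. Decomposing a homomorphism through its image yields
$$
\#\Hom(G_\la,G)=\sum_{\mu\subseteq\la}\#\{K\leq G:\ K\simeq G_\mu\}\,\#\mathrm{Sur}(G_\la,G_\mu),
$$
so that $\ms(x^\la)=\sum_{\mu\subseteq\la}\#\mathrm{Sur}(G_\la,G_\mu)\,\overline N_\mu$, where $\overline N_\mu$ denotes the heuristic average number of subgroups of $\Sha$ of type $\mu$. Computing $\overline N_\mu$ from the $q=p^2$ reduction of the previous paragraph, one is left with collapsing the convolution $\sum_\mu\#\mathrm{Sur}(G_\la,G_\mu)\,\overline N_\mu$ against the surjection numbers; this is handled by the identity expressing $C_{\la,\mu}(q)$ through surjections, combined with a $q$-binomial summation specialised at $q=1/p^2$. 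This is precisely the step for which \cite{delaunay_jouhet} provides a ready-made simplification, so I would import it rather than redo the Fouvry--Kl\"uners manipulation, and collecting terms gives $\sum_{\mu\subseteq\la}C_{\la,\mu}(p^2)\,p^{-|\mu|(2u-1)}$.

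The hard part will be the reduction of the second paragraph: proving rigorously that the symplectic weight $1/|\auts(H\oplus H)|$, the size factor $|H|^{2(1-u)}$, and the squared $\Hom$-count reassemble into the plain $q=p^2$ class-group weight with parameter $u-\tfrac12$. The delicate feature is that the subgroups $K\leq G$ occurring in the evaluation are arbitrary subgroups of $H\oplus H$ and need not be isotropic for the pairing, so one cannot simply count symplectic or isotropic subspaces; one must count all subgroups while carrying the symplectic automorphism weight and show that the non-isotropic contributions reorganise cleanly into the $p^2$-series. Controlling this interaction---together with the half-integer bookkeeping that must ultimately leave every exponent of $p$ an integer and make the normalising constants $\sum_G w_u(G)$ cancel---is the principal obstacle; once it is settled, the remaining work is the standard $q$-series computation already available in \cite{delaunay_jouhet,kluners_fouvry2}.
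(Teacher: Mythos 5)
You cannot prove this statement, and the paper does not: it is Conjecture~\ref{moment_sha}, an open arithmetic assertion about the actual averages of $|\Sha(E)[p]|^{m_1}\cdots|\Sha(E)[p^\ell]|^{m_\ell}$ over the family ${\mathcal F}_u$ ordered by conductor, imported from \cite{delaunay_jouhet} (``We raised the following conjecture in \cite{delaunay_jouhet}''). What your proposal computes is something different in kind: the moment $\ms(x^\la)$ of Delaunay's heuristic measure on groups of type S, i.e.\ the \emph{model's prediction}. That computation is genuine mathematics, but it is already done in the cited reference (it is exactly the source of the closed form, which the present paper invokes only as the known solution of the system \eqref{systeme_2} -- see the remark after Theorem 14 of \cite{delaunay_jouhet}); establishing it does not establish the conjecture, since nothing links the heuristic weight $|G|^{1-u}/|\auts(G)|$ to actual Tate--Shafarevich groups except the heuristic philosophy itself. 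The unconditional evidence is fragmentary (the $\la=1^1$, $p=2,3$ results of \cite{bhargava_shankar_1,bhargava_shankar_2,heath-brown,swinnerton-dyer,kane}, all requiring a strong rank conjecture), and the role of Conjecture~\ref{moment_sha} in this paper is purely as the \emph{hypothesis} of Theorem~\ref{general_sha}, which proves an implication between two conjectures, not either conjecture.

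Read as a reconstruction of the heuristic derivation in \cite{delaunay_jouhet}, your outline is essentially the right one, but your self-identified ``principal obstacle'' is a phantom. Once you write a type-S group as $G\simeq H\oplus H$ and use $\#\Hom(G_\la,H\oplus H)=\#\Hom(G_\la,H)^2=q^{(\la'|\nu')}$ with $q=p^2$ (for $H$ of type $\nu$), together with Delaunay's formula expressing $|\auts(H\oplus H)|$ as $|H|$ times the order of $\aut(H)$ with $p$ replaced by $p^2$, the weight becomes
$$
\frac{|G|^{1-u}}{|\auts(G)|}=\frac{q^{(1-u)|\nu|}}{|H|\,\bigl|\aut(H)\bigr|_{p\to p^2}}=\frac{q^{(\frac12-u)|\nu|}}{\bigl|\aut(H)\bigr|_{q}},
$$
i.e.\ the plain abelian-group weight at parameter $t=u-\tfrac12$ in the variable $q=p^2$; the abelian moment theorem of \cite{delaunay_jouhet} (valid for real parameter, so the half-integer shift costs nothing) then yields $\sum_{\mu\subseteq\la}C_{\la,\mu}(p^2)\,p^{-(2u-1)|\mu|}$ directly. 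At no point must you count subgroups of $G$ or worry about isotropy for the pairing: all subgroup combinatorics happens inside the abelian group $H$ at $q=p^2$, so the ``non-isotropic contributions'' you propose to control never enter. The genuine gap is therefore not the technical step you flagged but the categorical one: even carried out in full, your argument evaluates the model, and no such evaluation can prove the conjectured equality for elliptic curves.
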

\medskip
~\\
Concerning the probability laws of $p^j$-ranks $\rk_{p^j}(\Sha(E))$, we have the following (\cite{delaunay3}).
\begin{conjecture}\label{rk_laws_sha}
Let $\ell$ be a positive integer, let $\mu=\mu_1\geq \mu_2\geq \cdots \geq \mu_\ell \geq 0$ be an integer partition of length $\ell(\mu) \leq \ell$  
and let $u$ be a nonnegative integer. As $E/\Q$, ordered by conductors, is varying over ${\mathcal F}_u$, the probability that  $\rk_{p^j}(\Sha(E))=2\mu_j$ 
for all $1\leq j \leq \ell$ is equal to
$$
\frac{(1/p^{2u+2\mu_\ell+1};1/p^2)_\infty}{p^{2(\mu_1^2+\cdots + \mu_\ell^2)+(2u-1)(\mu_1+\cdots + \mu_\ell)} \prod_{j=1}^{\ell} (1/p^2;1/p^2)_{\mu_j-\mu_{j+1}}}.
$$
\end{conjecture}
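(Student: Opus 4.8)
The plan is to prove the statement in the conditional form parallel to Theorem~\ref{class-group}: assuming that the moment estimates of Conjecture~\ref{moment_sha} hold over ${\mathcal F}_u$ for every partition $\lambda$, I will deduce the rank-law of Conjecture~\ref{rk_laws_sha}. The key structural point is that the two problems differ from the class-group case only by the substitution $p\mapsto p^2$ together with a twist parameter $2u-1$. Indeed, since $\Sha(E)$ is of type S, its $p$-part is hyperbolic, hence isomorphic to $H\oplus H$ for a finite abelian $p$-group $H$; consequently $\rk_{p^j}(\Sha(E))=2\,\rk_{p^j}(H)$ is automatically even and $|\Sha(E)[p^j]|=|H[p^j]|^2$. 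Determining the joint law of $(\rk_{p^j}(\Sha(E)))_{1\le j\le\ell}$ is therefore the same as determining that of the half-ranks $\rho_j:=\rk_{p^j}(H)$, and the relevant base becomes $p^2$.

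First I would turn the moment hypothesis into a linear system relating the family averages to the sought densities. With the half-ranks $\rho_j$ one has $|\Sha(E)[p^j]|=p^{2(\rho_1+\cdots+\rho_j)}$, so for $\lambda=1^{m_1}\cdots\ell^{m_\ell}$ with conjugate $\lambda'$,
\begin{equation*}
\prod_{j=1}^\ell |\Sha(E)[p^j]|^{m_j}=(p^2)^{(\lambda'\,|\,\rho)}.
\end{equation*}
Writing $P_\mu$ for the density of the event $\{\rho_j=\mu_j,\ 1\le j\le\ell\}$ and feeding in Conjecture~\ref{moment_sha}, one is led to the identities
\begin{equation*}
\sum_{\mu} P_\mu\,(p^2)^{(\lambda'\,|\,\mu)}=\sum_{\kappa\subseteq\lambda}C_{\lambda,\kappa}(p^2)\,p^{-|\kappa|(2u-1)},
\end{equation*}
valid for every decreasing $\ell$-tuple $\lambda'=(\lambda'_1\ge\cdots\ge\lambda'_\ell\ge0)$. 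This is exactly the moment problem solved for Theorem~\ref{class-group}, but carried out in base $q=1/p^2$ and with the twist $p^{-|\kappa|(2u-1)}$; the negative- and positive-discriminant class-group cases correspond to $q=1/p$ with twist $0$ and $1$ respectively.

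Next I would carry out the inversion. The right-hand side is made explicit through the closed-form evaluation of the subgroup sum $\sum_{\kappa\subseteq\lambda}C_{\lambda,\kappa}(Q)\,x^{|\kappa|}$ proved in \cite{delaunay_jouhet}, which is the very result that simplifies the corresponding step of \cite{kluners_fouvry2}. Inserting it into the multivariate $q$-binomial inversion used for Theorem~\ref{class-group}, now in the variable $q=1/p^2$, isolates $P_\mu$ and, after collecting $q$-shifted factorials, produces
\begin{equation*}
\frac{(1/p^{2u+2\mu_\ell+1};1/p^2)_\infty}{p^{2(\mu_1^2+\cdots + \mu_\ell^2)+(2u-1)(\mu_1+\cdots + \mu_\ell)} \prod_{j=1}^{\ell} (1/p^2;1/p^2)_{\mu_j-\mu_{j+1}}}.
\end{equation*}
Here the denominator factor $p^{(2u-1)(\mu_1+\cdots+\mu_\ell)}$ and the shift of the exponent in the argument of the infinite product by $2u-1$ are exactly the traces left by the twist, while the infinite product itself reconstitutes the Cohen-Lenstra normalizing factor coming from the unconstrained higher ranks $\rho_{\ell+1},\rho_{\ell+2},\dots$.

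The main obstacle is not this algebra but making the inversion rigorous, that is, transferring the assumed existence of the moment averages to the existence of each density $P_\mu$ with the stated value. As in \cite{kluners_fouvry2} this means writing the indicator of a rank-event as an explicitly truncated combination of the functions $\prod_j|\Sha(E)[p^j]|^{m_j}$ and bounding the truncation error uniformly in the conductor bound $X$, so that the limit may be taken term by term. I expect the delicate point to be the convergence of this inversion: the limiting law has essentially log-normal tails, so the associated moment problem is \emph{not} determinate in the classical sense and no general uniqueness theorem applies; one must instead rely on the cancellation built into the alternating $q$-series, adapted to base $p^2$. The twist aggravates this for curves of rank $u=0$, where $2u-1=-1$ makes $p^{-|\kappa|(2u-1)}$ a growth factor, so the bulk of the work will be to check that the base-$p^2$ $q$-binomial sums still grow slowly enough for the truncated inversion to converge and for the high-rank tail to be negligible. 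Granting these estimates, which are the direct analogs of those in \cite{kluners_fouvry2}, the class-group and Tate-Shafarevich statements all follow from a single twisted moment inversion in the appropriate base.
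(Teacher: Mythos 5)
Your setup is the same as the paper's: since $\Sha(E)$ is of type S its ranks are even, $\prod_j|\Sha(E)[p^j]|^{m_j}=(p^2)^{(\la'|r)}$ on the event $\rk_{p^j}(\Sha(E))=2r_j$, and the moment hypothesis becomes the infinite system \eqref{systeme_2} in base $p^2$ with twist $p^{-|\mu|(2u-1)}$ --- this is exactly how the paper proves the conditional statement (Theorem~\ref{general_sha}). But at the decisive step your plan diverges and leaves a genuine gap. You propose to \emph{invert} the system directly: write the indicator of a rank event as a truncated multivariate $q$-binomial combination of the moment functions and bound the truncation error uniformly in $X$. The hypothesis, however, supplies only an unquantified $o_\lambda(1)$ for each fixed $\lambda$ (the paper's Remark~3 stresses precisely that no error terms come with the heuristics), so there is no visible source for uniformity over the infinitely many $\lambda$'s a truncated inversion must aggregate; and your fallback --- ``cancellation built into the alternating $q$-series'' --- is an expectation, not an argument. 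You correctly observe that the classical moment problem here is indeterminate, but you never identify the structural fact that restores uniqueness: the random variable $p^{\rk}$ is supported on the geometric sequence $\{p^{2r}\}$, and it is this discreteness, not series cancellation, that the proof must exploit.

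The paper's route supplies exactly the two ingredients your proposal lacks. First, instead of inverting, it extracts a limit point $e_{2r}$ of $N(X,2r)/N(X)$ by compactness, justifies passing to the limit in \eqref{eq_3} by dominated convergence --- the dominating bound being obtained by the self-improving trick of applying the moment hypothesis with each $m_i$ replaced by $2m_i+1$, as in \eqref{inegalite_sha} --- and quotes \cite{delaunay_jouhet} (Theorem 14 and the remark after it) for the fact that the conjectured law \emph{is} a solution of \eqref{systeme_2}; the closed formula is verified, not produced by an inversion you would still have to carry out. Second, and crucially, uniqueness of nonnegative solutions is proved analytically: Proposition~\ref{estimation_2} gives $\sum_{\mu\subseteq\la}C_{\la,\mu}(p^2)p^{-|\mu|(2u-1)}=O(p^{(\la'|\la')})$ (note your worry about $u=0$ is harmless, since $p^{-|\mu|(2u-1)}\leq p^{|\mu|}\leq p^{|\la|}$ is absorbed into this bound), whence any nonnegative solution satisfies $x_{2r}\ll p^{-(r|r)}=(p^2)^{-(r|r)/2}$ --- here the evenness is used \emph{quantitatively}, as $(\la'|2r)=2(\la'|r)$ doubles the decay rate against the $O(p^{(\la|\la)})$ growth --- and then Corollary~\ref{coro_2} with $a=p^2$ (the several-variable extension of Lemma~6 of \cite{kluners_fouvry2}: an entire function with coefficients $\ll a^{-(r|r)/2+\alpha|r|}$ vanishing at all points $(a^{m_1},\dots,a^{m_\ell})$ is identically zero) forces the difference of two solutions to vanish. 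Without this lemma, or a rigorously executed substitute for it, your argument does not close; everything after your system \eqref{systeme_2}-analogue is conditional on estimates you defer rather than prove.
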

\medskip
~\\
As previously, very few is known about these conjectures. Bhargava and Shankar \cite{bhargava_shankar_1, bhargava_shankar_2} obtained 
several results about the average of $|S(E)_p|$ over all elliptic curves $E/\Q$, where $S(E)_p$ is the $p$-Selmer group of $E/\Q$. Their results, 
together with a strong form of the rank conjecture 
(asserting that the rank of $E$ is $0$ or $1$ 
with probability $1/2$ each and that elliptic curves with rank $\geq 2$ do not contribute in the averages), imply Conjecture~\ref{moment_sha} for 
$\lambda=1^1$ and $p=2,3$. Heath-Brown \cite{heath-brown,heath-brown2}, then Swinnerton-Dyer \cite{swinnerton-dyer}  and Kane \cite{kane} also obtained results about $S(E)_2$ when 
$E$ is varying over some families of quadratic twists. 
Their results, together with a strong rank conjecture, imply Conjecture~\ref{moment_sha} 
for $\lambda=1^1$ and  $p=2$ for some families of quadratic twists. Furthermore, Conjecture~\ref{moment_sha} is compatible with the 
conjecture of Poonen and Rains (\cite{poonen-rains}).\medskip
~\\
In this context of elliptic curves, we will prove the following result.
\begin{theorem}\label{general_sha} 
Let $u$ be a nonnegative integer, and let $\ell$ be a positive integer. Assume that for 
any $\lambda=1^{m_1}2^{m_2}\cdots \ell^{m_\ell}$, as $E/\Q$, ordered by conductors, is varying over ${\mathcal F}_u$, 
the average of $|\Sha(E)[p]|^{m_1} |\Sha(E)[p^2]|^{m_2} \cdots |\Sha(E)[p^\ell]|^{m_\ell}$ is equal to 
$\sum_{\mu \subseteq \la} C_{\lambda,\mu}(p^2)p^{-|\mu|(2u-1)}$. Then for any $\mu_1\geq \mu_2\geq \cdots \geq \mu_\ell$, as $E/\Q$ is varying over ${\mathcal F}_u$, the probability that 
$\rk_{p^j}(\Sha(E))=2\mu_j$ for 
all $1\leq j \leq \ell$ is equal to
$$
\frac{(1/p^{2u+2\mu_\ell+1};1/p^2)_\infty}{p^{2(\mu_1^2+\cdots + \mu_\ell^2)+(2u-1)(\mu_1+\cdots + \mu_\ell)} \prod_{j=1}^{\ell} (1/p^2;1/p^2)_{\mu_j-\mu_{j+1}}}.
$$
\end{theorem}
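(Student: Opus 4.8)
The plan is to obtain Theorem~\ref{general_sha} by running, in the variable $p^2$, the very inversion that proves Theorem~\ref{class-group}, after recording the two features special to the Tate--Shafarevich setting: the type~S hypothesis and the occurrence of $p^2$ in the moments.

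First I would exploit the type~S structure. Since $\Sha(E)[p^\infty]$ carries a non-degenerate alternating pairing, it is isomorphic to $H\oplus H$ for some finite abelian $p$-group $H$; hence every $p^j$-rank is even and the event $\{\rk_{p^j}(\Sha(E))=2\mu_j,\ 1\le j\le\ell\}$ is exactly the event that the truncated rank vector of $H$ equals $\mu=(\mu_1\ge\cdots\ge\mu_\ell)$. Writing $R_j=\rk_{p^j}(\Sha(E))=2\mu_j$, one has $|\Sha(E)[p^j]|=p^{R_1+\cdots+R_j}=(p^2)^{\mu_1+\cdots+\mu_j}$, so with $\la=1^{m_1}\cdots\ell^{m_\ell}$ and conjugate $\la'$,
$$
|\Sha(E)[p]|^{m_1}\cdots|\Sha(E)[p^\ell]|^{m_\ell}=(p^2)^{\sum_i\mu_i\la'_i}=(p^2)^{(\mu|\la')}.
$$
Putting $q=p^2$, the moment hypothesis then reads
$$
\sum_\mu P(\mu)\,q^{(\mu|\la')}=\sum_{\nu\subseteq\la}C_{\la,\nu}(q)\,q^{-s|\nu|},\qquad s:=u-\tfrac12,
$$
where $P(\mu)$ is the probability that the truncated rank vector of $H$ is $\mu$. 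This is formally identical to the moment identity behind Theorem~\ref{class-group}, the only differences being $p\rightsquigarrow q=p^2$ and that the twisting exponent $s$, equal to $0$ or $1$ there, is now the half-integer $u-\tfrac12$.

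Next I would reproduce the inversion of Theorem~\ref{class-group}. For fixed $\ell$, as $\la$ runs over partitions with parts $\le\ell$ its conjugate $t:=\la'$ runs over all partitions of length $\le\ell$, so the displayed identity is a linear system $M^*(t)=\sum_\mu P(\mu)q^{(\mu|t)}$ for the unknown probabilities $P(\mu)$. Following Fouvry--Kl\"uners, one applies to the moment sequence an explicit $q$-analogue of iterated finite differencing; the resulting coefficients are products of $q$-binomials and $q$-shifted factorials in base $1/q$, and this isolates each $P(\mu)$ as a convergent alternating sum of the moments $M^*(t)$. Inserting $M^*(t)=\sum_{\nu\subseteq t'}C_{t',\nu}(q)q^{-s|\nu|}$ and invoking the combinatorial identity for the subgroup-counting polynomials $C_{\la,\nu}$ proved in~\cite{delaunay_jouhet} (the step that shortcuts the original argument), the alternating sum telescopes to
$$
P(\mu)=\frac{(q^{-(\mu_\ell+1+s)};q^{-1})_\infty}{q^{(\mu|\mu)+s|\mu|}\prod_{j=1}^{\ell}(q^{-1};q^{-1})_{\mu_j-\mu_{j+1}}}.
$$
Substituting $q=p^2$ and $s=u-\tfrac12$ turns this into the asserted formula, since then $q^{-(\mu_\ell+1+s)}=1/p^{2u+2\mu_\ell+1}$, $q^{(\mu|\mu)+s|\mu|}=p^{2(\mu_1^2+\cdots+\mu_\ell^2)+(2u-1)(\mu_1+\cdots+\mu_\ell)}$ and $(q^{-1};q^{-1})_k=(1/p^2;1/p^2)_k$.

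The main obstacle I anticipate is not the algebra but the analytic legitimacy of the inversion. The system is infinite and the moments $M^*(t)$ grow with $t$, so one must justify that the alternating inversion converges and singles out precisely the distribution above, and do so uniformly in the new parameters $q=p^2\ge4$ and $s=u-\tfrac12$. The genuinely new point compared with Theorem~\ref{class-group} is that $s$ is a half-integer and, for $u=0$, is negative ($s=-\tfrac12$), so the twist $q^{-s|\nu|}=p^{|\nu|}$ now grows; I would check that the Fouvry--Kl\"uners convergence estimates survive this, the decisive decay coming from the factor $q^{-(\mu|\mu)}=p^{-2(\mu|\mu)}$ together with $q=p^2$, and that no step of the $q$-series evaluation used integrality of $s$. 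Granting this, the reduction above completes the proof.
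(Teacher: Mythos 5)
Your opening reduction is correct and coincides with how the paper's proof begins: the type~S structure forces every $\rk_{p^j}(\Sha(E))$ to be even, so only partitions $2r$ contribute, and the identity $p^{(\la'|2r)}=(p^2)^{(\la'|r)}$ converts the moment hypothesis into the class-group-type system in base $q=p^2$ with twist exponent $s=u-\tfrac12$; you also correctly isolate the one genuinely new analytic feature, namely that for $u=0$ the twist $q^{-s|\nu|}=p^{|\nu|}$ grows. The gap lies in what you do with that system. The ``explicit $q$-analogue of iterated finite differencing'' that ``isolates each $P(\mu)$'' and then ``telescopes'' to the stated formula is asserted, never constructed: no inversion formula is written down, no convergence or rearrangement is justified, and for $\ell\geq 2$ (unknowns and equations both indexed by partitions, transition coefficients $q^{(\mu|\la')}$) no such explicit inversion is a known routine tool. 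Nor can you appeal to ``the very inversion that proves Theorem~\ref{class-group}'': the paper does not prove Theorem~\ref{class-group} (i.e.\ Theorem~\ref{general_clgp}) by inversion either, so there is no prior argument to rerun. Finally, your identity $\sum_\mu P(\mu)q^{(\mu|\la')}=\cdots$ presupposes that the probabilities $P(\mu)$ exist, which is itself part of what Theorem~\ref{general_sha} asserts.

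The paper replaces your black box by two concrete ingredients. First, \emph{existence} of a solution of the system \eqref{systeme_2} is not obtained by inversion but by citation: the candidate distribution is known to have exactly these moments (\cite{delaunay_jouhet}, remark after Theorem~14). Second, \emph{uniqueness} among nonnegative solutions: by Proposition~\ref{estimation_2} the right-hand sides are $O_{p,\ell}\bigl(p^{(\la'|\la')}\bigr)=O\bigl((p^2)^{(\la'|\la')/2}\bigr)$ --- this is precisely where your $u=0$ worry is settled, the extra factor $p^{|\la|}$ coming from the growing twist being absorbed at the cost of doubling the exponent, which is still exactly the critical growth for the analytic lemma in base $a=p^2$ --- whence nonnegativity forces any solution to satisfy $x_{2r}\ll (p^2)^{-(r|r)/2}$; the difference of two solutions is then an entire function of $\ell$ variables vanishing at all points $\bigl((p^2)^{m_1},\dots,(p^2)^{m_\ell}\bigr)$ with coefficients $\ll (p^2)^{-(r|r)/2}$, and Corollary~\ref{coro_2} with $a=p^2$ forces it to vanish identically. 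Combined with compactness of $[0,1]$ and dominated convergence (showing every limit point of $N(X,2r)/N(X)$ solves the system), this simultaneously proves that the limit exists and equals the formula. Without these two steps --- or a fully worked-out, convergence-justified inversion replacing them --- your proposal is a correct change of variables followed by an unproved claim, not a proof.
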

\medskip
~\\
{\bf Remarks.}
~\\
1- In Theorem \ref{class-group} (resp. Theorem \ref{general_sha}) one can replace class groups (resp. Tate-Shafarevich groups) 
by finite abelian groups (resp. groups of type S) varying in some families. In particular, we can obtain similar results for 
Selmer groups of elliptic curves (see section~\ref{Selmer}). \smallskip
~\\
2- From the laws of the $p^j$-ranks for all $j=1,\dots,\ell$ as in theorems~\ref{class-group} and~\ref{general_sha}, one can also deduce the probability 
that the $p^\ell$-rank is equal to some fixed value for a single~$\ell$. In this case, we recover the results and conjectures from \cite{cohen3} and \cite{delaunay3}. 
\smallskip
~\\
3- One can also ask if it is possible to deduce the moments from the probability laws of the $p^j$-ranks for all $j=1,\dots, \ell$. For this, it seems that 
we need to know an error term for the probability laws and this error term is not given by the heuristic philosophy. However, the theoretical 
results of \cite{kluners_fouvry, heath-brown, swinnerton-dyer, kane} concern the $p^j$-ranks of the groups studied (with an explicit error term) 
from which the moments are deduced.   
\section{An auxiliary analytic tool}
In this section, we prove a generalization of \cite[Lemma 6]{kluners_fouvry2} which will be useful later.
\begin{lemma} Let $a\in \C$ with $|a|>1$ and $g(z)=\sum_{r\geq 0} w_r z^r$ be an entire function satisfying the following properties:
\begin{itemize}
\item there exists an absolute constant $C>0$ and $\alpha\in \R$ such that for all $r\in\mathbb{N}$, $|w_r| \leq C a^{-r^2/2+\alpha r}$;
\item for all $m\in \N$, $g(a^m)=0$.
\end{itemize}
We denote by $\omega\in \N \cup \{\infty \}$ the vanishing order of $g$ at $z=0$. Then, if $\omega > \alpha -1/2$, we have $g\equiv 0$ (i.e. 
$\omega=\infty$).
\end{lemma}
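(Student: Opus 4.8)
The plan is to argue by contradiction with Jensen's formula, exploiting the fact that the Gaussian decay of the coefficients forces the maximal growth of $g$ on large circles to match \emph{exactly} the growth imposed by the prescribed zeros $1,a,a^2,\dots$. So I would assume $g\not\equiv0$, which makes $\omega$ finite with $w_\omega\neq0$, and set $\lambda:=\log|a|>0$. (Note $g$ is genuinely entire, since $|w_r|\le C|a|^{-r^2/2+\alpha r}$ gives $|w_r|^{1/r}\to0$.) The goal is to reach $\omega\le\alpha-1/2$, contradicting the hypothesis.

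First I would bound the growth of $g$. On the circle $|z|=R$ with $R=|a|^{s}$, the triangle inequality and the coefficient bound give
$$
|g(z)|\le\sum_{r\ge0}|w_r|R^r\le C\sum_{r\ge0}|a|^{-r^2/2+(\alpha+s)r}=C|a|^{(\alpha+s)^2/2}\sum_{r\ge0}|a|^{-(r-(\alpha+s))^2/2}.
$$
The remaining Gaussian sum is bounded by a constant depending only on $\lambda$ (compare it with $\int_{\R}e^{-\lambda t^2/2}\,dt$), so that
$$
\log\max_{|z|=R}|g(z)|\le\frac{\lambda s^2}{2}+\alpha\lambda s+O(1),
$$
the implied constant depending only on $C,\alpha,\lambda$.

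Next I would apply Jensen's formula, taking $R=|a|^{N+1/2}$ with $N\to\infty$ so that no zero lies on the circle. Writing $g(z)=z^\omega h(z)$ with $h(0)=w_\omega\neq0$, Jensen's formula reads
$$
\frac1{2\pi}\int_0^{2\pi}\log|g(Re^{i\theta})|\,d\theta=\log|w_\omega|+\omega\log R+\sum_{0<|z_k|<R}\log\frac{R}{|z_k|}.
$$
Since $g$ vanishes at $a^0=1,a,\dots,a^{N}$, all lying in $|z|<R$, and each term $\log(R/|z_k|)$ is nonnegative, the right-hand side is at least $\log|w_\omega|+\omega\lambda(N+\tfrac12)+\lambda\sum_{m=0}^{N}(N+\tfrac12-m)$. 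Here $\sum_{m=0}^{N}(N+\tfrac12-m)=(N+1)^2/2$, so this lower bound equals $\tfrac{\lambda N^2}{2}+\omega\lambda N+\lambda N+O(1)$. Combining with $\frac1{2\pi}\int\log|g|\le\log\max_{|z|=R}|g|$ and the growth bound above (which is $\tfrac{\lambda N^2}{2}+\tfrac{\lambda N}{2}+\alpha\lambda N+O(1)$ at $s=N+\tfrac12$), the dominant terms $\tfrac{\lambda N^2}{2}$ cancel; dividing the remaining inequality by $\lambda N$ and letting $N\to\infty$ yields $\tfrac12+\alpha\ge1+\omega$, i.e. $\omega\le\alpha-\tfrac12$, the desired contradiction.

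The main obstacle, and really the entire content of the lemma, is the exact cancellation of the quadratic terms $\lambda N^2/2$: the growth permitted by the coefficient bound is precisely the growth produced by a geometric progression of zeros $\{a^m\}$, so no contradiction can come from the leading order. Everything is then decided at the linear order in $N$, where the zero at $z=1$ (the term $m=0$) contributes the crucial extra $\tfrac12$ and produces the threshold $\alpha-\tfrac12$. Pinning down this constant is the delicate step, and it is exactly what makes the stated hypothesis $\omega>\alpha-1/2$ sharp; the choice $R=|a|^{N+1/2}$ is there only to keep the zeros off the integration circle and make the lower-order bookkeeping clean.
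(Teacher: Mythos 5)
Your proof is correct, and its skeleton is the same as the paper's: play a coefficient-forced upper bound for $\sup_{|z|=R}|g(z)|$ against a zero-forced lower bound, note that the quadratic terms in the exponents cancel exactly, and read off the contradiction $\omega\le\alpha-\tfrac12$ at linear order. Your upper bound computation (completing the square, uniform bound on the Gaussian sum) is literally the one in the paper. The difference is the lower bound: the paper does not prove it but quotes Lemma~6 of Fouvry--Kl\"uners, which asserts that a nonzero entire function vanishing at every $a^m$, with vanishing order $\omega$ at the origin, satisfies $\sup_{|z|=|a|^k}|g(z)|\gg |a|^{k(k+1)/2+k\omega}$; comparing exponents with $C'|a|^{(k+\alpha)^2/2}$ then gives $\omega\le\alpha-\tfrac12$. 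You instead derive the equivalent lower bound self-containedly from Jensen's formula, and your bookkeeping checks out: with $R=|a|^{N+1/2}$ the known zeros $1,a,\dots,a^N$ contribute $\lambda\sum_{m=0}^{N}(N+\tfrac12-m)=\lambda(N+1)^2/2$ (any further zeros only help, since each term $\log(R/|z_k|)$ is nonnegative, and unknown zeros on the circle itself do not invalidate Jensen's formula), the origin contributes $\omega\lambda(N+\tfrac12)$, and after cancellation of $\lambda N^2/2$ the linear terms give $1+\omega\le\tfrac12+\alpha$. So the two arguments extract the same information from the geometric progression of zeros and land on the same sharp threshold; what your route buys is self-containedness (in effect you reprove the Fouvry--Kl\"uners lemma for this zero set), at the cost of being longer than the paper's three-line proof by citation.
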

\begin{proof}
Let $k$ be a nonnegative integer. Taking $|z|=|a|^k$, a direct computation shows that  $|g(z)| \leq C' |a|^{(k+\alpha)^2/2}$, where 
$C'=C \sum_{r\in \Z} |a|^{-(r-\alpha)^2/2}$. Assume that $g \not \equiv 0$.
Then \cite[Lemma 6]{kluners_fouvry2} gives
$$
\sup_{|z|=|a|^k} |g(z)| \gg |a|^{k(k+1)/2 + k\omega}.
$$
Hence, we must have $(k+\alpha)^2 \geq k(k+1) +2k\omega$ for all $k\in \N$, which implies $\omega \leq \alpha -1/2$.
\end{proof}
%
%
%
\begin{corollary}\label{coro_2}
Let $\ell \in \N^*$, let $a\in \C$ with $|a|>1$ and let $g(\underline{z}) = \sum_{r} w_r z_1^{r_1}z_2^{r_2}\cdots z_\ell^{r_\ell}$ with $\underline{z}=(z_1,z_2,\cdots,z_\ell) \in \C^\ell$ and where the sum is over all integer partitions $r= r_1\geq r_2 \geq \cdots \geq r_\ell \geq 0$. We assume that:
\begin{itemize}
\item $|w_r| \leq C\, a^{-(r|r)/2+ \alpha |r|}$ for some absolute constant $C$ and $\alpha <3/2$;
\item $g(a^{m_1},a^{m_2},\dots, a^{m_\ell}) = 0$ for all nonnegative integers $m_1, m_2, \dots, m_\ell$.
\end{itemize}
If $\alpha<1/2$, then $g\equiv0$ and $w_r =0$ for all $r$.
If $\alpha \in [1/2,3/2[$ and $w_{0,0,\dots,0}=0$, then $g\equiv0$ (therefore $w_r =0$ for all $r$).
\end{corollary}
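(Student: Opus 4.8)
The plan is to reduce the several–variable statement to the one–variable Lemma above by peeling off the variable $z_1$ attached to the \emph{largest} part of the partition. Since $|a|>1$, the hypothesis $|w_r|\leq C\,|a|^{-(r|r)/2+\alpha|r|}$ forces the coefficients to decay faster than any geometric rate, so $g$ is entire and all the rearrangements below are justified by absolute convergence. Grouping the monomials according to the value $N=r_1$ of the largest part, I would write
$$g(\underline z)=\sum_{N\geq 0} z_1^{N}\,h_N(z_2,\dots,z_\ell),\qquad h_N(z_2,\dots,z_\ell)=\sum_{N\geq r_2\geq\cdots\geq r_\ell\geq 0} w_{(N,r_2,\dots,r_\ell)}\,z_2^{r_2}\cdots z_\ell^{r_\ell}.$$
The decisive structural point is that in each $h_N$ every remaining part is bounded by $N$, so $h_N$ is a \emph{polynomial} in $z_2,\dots,z_\ell$; this is exactly why peeling the largest part (rather than $z_\ell$) is the right move.

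Next I would fix $(m_2,\dots,m_\ell)\in\N^{\ell-1}$ and consider the one–variable function $\psi(z_1):=g(z_1,a^{m_2},\dots,a^{m_\ell})=\sum_{N\geq 0}d_N z_1^{N}$ with $d_N=h_N(a^{m_2},\dots,a^{m_\ell})$. The key estimate is a growth bound on $d_N$: splitting off the $N$–dependence and bounding the inner sum (after dropping the ordering constraint and the bound $r_i\leq N$) by a product of Gaussian-type series $\sum_{r\geq 0}|a|^{-r^2/2+(\alpha+m_i)r}$, each of which converges because $|a|>1$, one gets $|d_N|\leq C''\,|a|^{-N^2/2+\alpha N}$ with a constant $C''$ that depends on the fixed $m_i$ but \textbf{not} on $N$. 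Thus $\psi$ satisfies the hypotheses of the Lemma with the same $\alpha$, and $\psi(a^{m_1})=g(a^{m_1},\dots,a^{m_\ell})=0$ for all $m_1$. To control the vanishing order $\omega$ of $\psi$ at $0$ I note that $d_0=h_0=w_{(0,\dots,0)}$, since $r_1=0$ forces all parts to vanish. In the case $\alpha<1/2$ one has $\omega\geq 0>\alpha-1/2$ with no extra assumption, while in the case $\alpha\in[1/2,3/2[$ the hypothesis $w_{0,\dots,0}=0$ gives $d_0=0$, hence $\omega\geq 1>\alpha-1/2$. Either way the Lemma yields $\psi\equiv 0$, i.e.\ $d_N=0$ for every $N$.

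As $(m_2,\dots,m_\ell)$ was arbitrary, I conclude that for each fixed $N$ the polynomial $h_N(z_2,\dots,z_\ell)$ vanishes at every point of the product set $S\times\cdots\times S$, where $S=\{a^m:m\in\N\}$ is infinite because $|a|>1$. A standard one-line induction shows that a polynomial vanishing on a product of infinite sets is identically zero, so all its coefficients $w_{(N,r_2,\dots,r_\ell)}$ vanish. Letting $N$ range over $\N$ gives $w_r=0$ for every partition $r$, and hence $g\equiv 0$, which proves both assertions.

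I expect the only real work to be the growth estimate for $d_N$ in the second paragraph: one must check that the Gaussian sums are bounded uniformly enough that the resulting constant is independent of $N$ (so that the Lemma applies), while keeping track of the shift $\alpha+m_i$ in the exponent. Everything else is either the direct invocation of the Lemma or the elementary fact about polynomials; the genuinely clever step is the choice to peel the largest part, which turns each slice $h_N$ into a polynomial and thereby removes any need for an induction on $\ell$ or for lowering the value of $\alpha$.
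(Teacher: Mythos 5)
Your proof is correct and follows essentially the same route as the paper's own: you fix $(m_2,\dots,m_\ell)$, peel off the variable $z_1$ attached to the largest part $r_1$, bound the resulting one-variable coefficients by convergent Gaussian-type series so the Lemma applies with the same $\alpha$ (using $d_0=w_{0,\dots,0}$ to control the vanishing order), and then exploit that each slice $h_N$ is a polynomial vanishing on a product of infinite sets. The only cosmetic difference is that the paper handles $\ell=1$ separately and leaves the growth estimate and the polynomial step more implicit, whereas you spell them out.
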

\begin{proof}
If $\ell=1$, this is proved by the above lemma, since we have $\omega > \alpha -1/2$ in both cases under consideration. If $\ell\geq 2$, we fix $(m_2, \dots, m_\ell) \in \N^{\ell-1}$ and set 
$$
f(z)=\sum_{r_1} z^{r_1} \left(\sum_{r_1 \geq r_2 \geq \cdots \geq r_\ell} w_{r_1,r_2,\dots,r_\ell} a^{r_2m_2}\cdots a^{r_\ell m_\ell} \right).  
$$
Then $f(z)$ satisfies the condition of the previous lemma since
\begin{eqnarray*}
\left|\sum_{r_2\geq \dots\geq r_\ell\geq0  } w_{r_1,r_2,\dots,r_\ell} a^{r_2m_2}\cdots a^{r_\ell m_\ell}\right| &&\ll_{m_2,\dots,m_\ell} a^{-(r_1|r_1)/2+\alpha r_1}.
\end{eqnarray*}
With the conditions of the corollary, we deduce that $f(z)=0$, therefore for any fixed $r_1\geq0$, we must have
$$
\sum_{r_1\geq r_2 \geq \cdots \geq r_\ell\geq0}  w_{r_1,r_2,\dots,r_\ell} a^{r_2m_2}\cdots a^{r_\ell m_\ell} = 0,
$$
for all $m_2,\dots, m_\ell$. We use the fact that when $r_1$ is fixed, 
$$
\ds \sum_{r_1\geq r_2 \geq \cdots \geq r_\ell\geq0}  w_{r_1,r_2,\dots,r_\ell} z_2^{r_2}\cdots z_r^{r_\ell}
$$
is a polynomial to conclude.
\end{proof}

\section{Class groups of number fields}
We will actually prove a more general result displayed in Theorem~\ref{general_clgp} (which clearly implies Theorem~\ref{class-group}). If $K$ is a number field, we denote by $\cl(K)$ its class group. Let ${\mathcal K}$ be a fixed set of number fields 
ordered by the absolute value of their discriminant $\disc(K)$. If $f$ is a real valued function defined over isomorphism classes of finite abelian 
groups, then, as before, 
we say that $f(\cl(K))$ has average value $c \in \R$ for $K$ varying over ${\mathcal K}$ if  
$$
\sum_{\substack{K\in { \mathcal K} \\ |\disc(K)|<X}} f(\cl(K))= (c+o(1)) \sum_{\substack{K\in { \mathcal K} \\ |\disc(K)|<X}} 1 , \quad  \mbox{ as } X \rightarrow \infty.
$$
As before, if $f$ is the characteristic function of some property, we say that $c$ is the probability of this property (or the density of the 
set of the class groups satisfying it) for $K$ varying in ${\mathcal K}$.
\begin{theorem}\label{general_clgp}
Let $u$ be a nonnegative integer and let $\ell$ be a positive integer. Assume 
that for every integer partition $\lambda~=~1^{m_1}2^{m_2}\cdots \ell^{m_\ell}$, as $K$ is varying over ${\mathcal K}$, the average of $|\cl(K)[p]|^{m_1} \cdots |\cl(K)[p^\ell]|^{m_\ell}$ is equal to $\sum_{\mu \subseteq \la} C_{\la,\mu}(p)p^{-u|\mu|}$. Then for any $\mu_1\geq \mu_2\geq \cdots \geq \mu_\ell$, as  $K$ is varying over ${\mathcal K}$, the probability that 
$\rk_{p^j}(\cl(K))=\mu_j$ for all $1\leq j \leq \ell$, is equal to
$$
\frac{(1/p^{u+\mu_\ell+1};1/p)_\infty}{p^{\mu_1^2+\cdots + \mu_\ell^2+u(\mu_1+\cdots + \mu_\ell)}  \prod_{j=1}^{\ell} (1/p;1/p)_{\mu_j-\mu_{j+1}}}.
$$
\end{theorem}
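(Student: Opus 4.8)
The plan is to recover the rank distribution from the moment hypothesis by a uniqueness argument, using Corollary~\ref{coro_2} as the multivariate uniqueness engine in place of the single‑variable lemma of~\cite{kluners_fouvry2} that handled the case $\ell=1$. The starting point is purely algebraic: if $G$ is a finite abelian $p$‑group and $r_j=\rk_{p^j}(G)$, then $|G[p^i]|=p^{\,r_1+\cdots+r_i}$, so for $\lambda=1^{m_1}\cdots\ell^{m_\ell}$ one has $\prod_{i=1}^\ell |G[p^i]|^{m_i}=p^{(\mu|\nu)}$, where $\mu=(r_1,\dots,r_\ell)$ and $\nu_k=\sum_{i\ge k}m_i=\lambda'_k$. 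As $(m_1,\dots,m_\ell)$ runs over $\N^\ell$, the vector $\nu=\lambda'$ runs over all partitions with at most $\ell$ parts. Writing $D_\mu(X)$ for the proportion of $K\in\mathcal K$ with $|\disc(K)|<X$ and $\rk_{p^j}(\cl(K))=\mu_j$ for $1\le j\le\ell$, the hypothesis becomes
$$
\lim_{X\to\infty}\sum_{\mu}D_\mu(X)\,p^{(\mu|\nu)}=\sum_{\rho\subseteq\nu'}C_{\nu',\rho}(p)\,p^{-u|\rho|}
$$
for every partition $\nu$ with at most $\ell$ parts, and the goal is $D_\mu(X)\to P_\mu$, with $P_\mu$ the claimed probability.

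First I would establish tightness of the discrete measures $\mu\mapsto D_\mu(X)$: taking $\nu=(N,\dots,N)$ bounds $\sum_\mu D_\mu(X)\,p^{N|\mu|}$ uniformly in $X$, so no mass escapes, and every subsequence admits a further subsequence along which $D_\mu(X)\to Q_\mu$ for a probability distribution $\{Q_\mu\}$. The uniform moment bounds let me pass the limit through the sum, so $\{Q_\mu\}$ satisfies $\sum_\mu Q_\mu\,p^{(\mu|\nu)}=\sum_{\rho\subseteq\nu'}C_{\nu',\rho}(p)p^{-u|\rho|}$ for all $\nu$. It then suffices to show these moments determine the distribution and that $\{P_\mu\}$ realizes them.

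The next step is the combinatorial identity $\sum_{\mu}P_\mu\,p^{(\mu|\nu)}=\sum_{\rho\subseteq\nu'}C_{\nu',\rho}(p)\,p^{-u|\rho|}$ for every $\nu$ (with $\nu=0$ giving $\sum_\mu P_\mu=1$); this is the content I would import from the generating‑function computations of~\cite{delaunay_jouhet}, which is exactly the result that streamlines this step compared with the direct $\eta$‑function manipulations of~\cite{kluners_fouvry2}. For the uniqueness, set $w_\mu=Q_\mu-P_\mu$ and $g(\underline z)=\sum_{\mu_1\ge\cdots\ge\mu_\ell\ge0}w_\mu z_1^{\mu_1}\cdots z_\ell^{\mu_\ell}$; by the previous two steps $g$ vanishes at every point $(p^{\nu_1},\dots,p^{\nu_\ell})$ with $\nu$ a partition. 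The explicit shape of $P_\mu$ gives $P_\mu\ll p^{-(\mu|\mu)-u|\mu|}$, and the specialization $\nu=\mu$ yields $Q_\mu\,p^{(\mu|\mu)}\le\sum_{\rho\subseteq\mu'}C_{\mu',\rho}(p)p^{-u|\rho|}\ll p^{(\mu|\mu)/4}$, so both $P_\mu$ and $Q_\mu$, hence $w_\mu$, decay faster than $p^{-(\mu|\mu)/2}$. Thus the coefficient bound of Corollary~\ref{coro_2} holds with $\alpha=0<1/2$, Corollary~\ref{coro_2} forces $g\equiv0$, and $Q_\mu=P_\mu$ for all $\mu$. Since every subsequential limit equals $\{P_\mu\}$, the whole sequence converges: $D_\mu(X)\to P_\mu$. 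No separate treatment of the two sign cases is needed, as both are the instances $u=0,1$ of the single statement.

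I expect the main obstacle to lie in the honest deployment of the uniqueness engine, not in the bookkeeping. The combinatorial identity of the third step is real work but is outsourced to~\cite{delaunay_jouhet}. The genuinely delicate point is that the moment hypothesis only delivers vanishing of $g$ at the \emph{partition} points $p^{\nu_1}\ge\cdots\ge p^{\nu_\ell}$, whereas Corollary~\ref{coro_2} is phrased for vanishing on all of $\N^\ell$; I have checked that one cannot repair this by passing to the partial‑sum variables $R_i=r_1+\cdots+r_i$ (which do give vanishing on all of $\N^\ell$), because in those variables the Cohen--Lenstra decay is too slow relative to the quadratic form $\sum_iR_i^2$. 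The reconciliation must instead exploit that the decay $p^{-(\mu|\mu)}$ is \emph{twice} the Gaussian rate $p^{-(\mu|\mu)/2}$ tolerated by the corollary: this surplus is precisely what absorbs the finitely many missing evaluation points in the one‑variable growth estimate (a Jensen/Hadamard lower bound against the coefficient upper bound) underlying the proof of Corollary~\ref{coro_2}, so that fixing the last $\ell-1$ coordinates at partition values and running the one‑variable argument on the first coordinate still forces $g\equiv0$.
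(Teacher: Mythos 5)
Your proposal has the same overall architecture as the paper's proof: extract subsequential limits $d_r$ of $N(X,r)/N(X)$ by compactness, use Proposition~\ref{estimation_1} (with $m_i$ replaced by $2m_i+1$) together with dominated convergence to show that every limit point solves the moment system \eqref{systeme}, import from \cite{delaunay_jouhet} (Theorem 14) the fact that the claimed probabilities do solve it, and finish by uniqueness of nonnegative solutions via the analytic corollary. Where you genuinely depart from the paper is at the uniqueness step, and your departure addresses a point the paper passes over silently: the paper asserts that $f(\underline z)=\sum_r w_r z_1^{r_1}\cdots z_\ell^{r_\ell}$ vanishes at $(p^{m_1},\dots,p^{m_\ell})$ for \emph{all} $(m_1,\dots,m_\ell)\in\N^\ell$ and then applies Corollary~\ref{coro_2} as a black box, whereas the moment hypothesis only delivers the decreasing tuples (the exponent vectors $\lambda'$ arising from $\lambda=1^{m_1}\cdots\ell^{m_\ell}$ are exactly the partitions with at most $\ell$ parts), and, as you correctly note, the substitution $R_i=r_1+\cdots+r_i$ that would produce all of $\N^\ell$ destroys the Gaussian decay (for $r=(n,\dots,n)$ one has $(r|r)=\ell n^2$ against $\sum_i R_i^2\sim\ell^3n^2/3$). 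Your repair is sound and I checked the exponents: the refined estimate in the remark after Proposition~\ref{estimation_1} gives $C_{\lambda'}=O_{p,\ell}\bigl(p^{(\lambda|\lambda)/4}|\lambda|^\ell\bigr)$, so positivity and the specialization $\lambda=r$ give $x_r\ll p^{-3(r|r)/4+\varepsilon|r|}$ for \emph{any} solution, while the explicit solution is $\ll p^{-(r|r)}$ (so the binding rate is $3/4$, not the "twice Gaussian" of your phrasing, but $3/4>1/2$ is all that matters); with this strictly super-Gaussian decay, fixing $m_2\ge\cdots\ge m_\ell$ and running the one-variable argument in $z_1$, the upper bound on $|z|=p^k$ is $p^{k^2/3+O(k)}$ while Jensen's lower bound using only the available zeros $p^{m_1}$, $m_2\le m_1\le k-1$, is $p^{k^2/2+O(k)}$ if the function is not identically zero --- a contradiction for large $k$ irrespective of the order of vanishing at $0$, i.e.\ irrespective of the finitely many missing zeros, which is exactly where the paper's bound $|w_r|\le 2c_0\,p^{-(r|r)/2}$ (precisely Gaussian, $\alpha=0$) would not suffice. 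The final polynomial step of Corollary~\ref{coro_2} works verbatim at partition points, so your argument closes. In short: your route is the paper's route, except that your version justifies the vanishing hypothesis of Corollary~\ref{coro_2} (which the paper asserts without argument), at the cost of invoking the sharper bound that the paper states in a remark and declares unnecessary; your intermediate sentence "Corollary~\ref{coro_2} forces $g\equiv0$" is not licensed as literally written, but your closing paragraph supplies the correct substitute.
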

\medskip
~\\
We follow and generalize the proof of \cite{kluners_fouvry2}. First, we will need the following proposition.
\begin{proposition}\label{estimation_1}
Let $u\in \N$ and $\ell \in \N$. For all $\lambda=1^{m_1}2^{m_2}\cdots \ell^{m_\ell}$ we have 
\begin{equation}\label{polynomial_C}
\sum_{\mu \subseteq \la} C_{\la,\mu}(p)p^{-u|\mu|} = O_{p,\ell}(p^{(\lambda' | \lambda') /2}).
\end{equation}
\end{proposition}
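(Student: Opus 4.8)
The plan is to bound the quantity $\sum_{\mu\subseteq\lambda}C_{\lambda,\mu}(p)p^{-u|\mu|}$ by controlling the size of each term $C_{\lambda,\mu}(p)$ uniformly in $\mu\subseteq\lambda$, and then to observe that the number of partitions $\mu\subseteq\lambda$ is $O_{p,\ell}(1)$ (indeed bounded purely in terms of $\ell$ and the $m_i$, but since these are fixed for a given $\lambda$, the implied constant may absorb them into the $O_{p,\ell}$). First I would recall the explicit formula~\eqref{coeff}, namely
$$
C_{\lambda,\mu}(p)=p^{\sum_{i\geq1}\mu'_{i+1}(\lambda'_i-\mu'_i)}\,\prod_{i\geq1}\qbi{\lambda'_i-\mu'_{i+1}}{\lambda'_i-\mu'_i}{p}.
$$
Since each $q$-binomial coefficient $\qbi{n}{k}{p}$ is a polynomial in $p$ whose degree is $k(n-k)$, I would estimate $\qbi{\lambda'_i-\mu'_{i+1}}{\lambda'_i-\mu'_i}{p}=O_{p}\big(p^{(\lambda'_i-\mu'_i)(\mu'_i-\mu'_{i+1})}\big)$, the implied constant being absorbable since the $q$-binomial has nonnegative integer coefficients and bounded many terms.

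Next I would collect the exponents of $p$. Dropping the factor $p^{-u|\mu|}\leq1$ (as $u\geq0$ and $|\mu|\geq0$), it suffices to bound the exponent of $p$ in $C_{\lambda,\mu}(p)$ itself. Adding the exponent from the prefactor to that of the product, the total exponent is at most
$$
\sum_{i\geq1}\mu'_{i+1}(\lambda'_i-\mu'_i)+\sum_{i\geq1}(\lambda'_i-\mu'_i)(\mu'_i-\mu'_{i+1}).
$$
The heart of the matter is to show this is bounded above by $(\lambda'|\lambda')/2=\tfrac12\sum_i(\lambda'_i)^2$, uniformly over all $\mu\subseteq\lambda$. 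I would rewrite the two sums in terms of the conjugate coordinates, using $\mu\subseteq\lambda \iff \mu'_i\leq\lambda'_i$ for all $i$, and simplify: the summand combines to $(\lambda'_i-\mu'_i)\mu'_i$ after cancellation of the $\mu'_{i+1}$ terms, giving total exponent $\sum_i(\lambda'_i-\mu'_i)\mu'_i=\sum_i\big(\lambda'_i\mu'_i-(\mu'_i)^2\big)$. Then the elementary inequality $x y-y^2\leq x^2/4\leq x^2/2$ for real $x,y$ (applied with $x=\lambda'_i$, $y=\mu'_i$) yields the bound $\sum_i(\lambda'_i)^2/2=(\lambda'|\lambda')/2$ termwise, which is exactly the desired estimate.

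The main obstacle I anticipate is carrying out the telescoping of the $\mu'_{i+1}$ contributions cleanly, since the prefactor exponent involves $\mu'_{i+1}$ while the $q$-binomial degrees involve $\mu'_i-\mu'_{i+1}$; one must verify that these recombine into the single clean sum $\sum_i(\lambda'_i-\mu'_i)\mu'_i$ without leaving boundary terms. Once that algebraic identity is secured, the rest is the termwise quadratic inequality together with the crude bound on the number of subpartitions $\mu\subseteq\lambda$, which contributes only to the $O_{p,\ell}$ constant. I would finish by noting that since the exponent bound holds for every $\mu$ and there are boundedly many such $\mu$, the whole sum is $O_{p,\ell}(p^{(\lambda'|\lambda')/2})$, as claimed.
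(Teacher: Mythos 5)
Your computation of the exponent is correct and matches the paper's: combining the prefactor in \eqref{coeff} with the degree $(\lambda'_i-\mu'_i)(\mu'_i-\mu'_{i+1})$ of each $q$-binomial gives, per index $i$, the exponent $(\lambda'_i-\mu'_i)\mu'_i$ (the cancellation is termwise in $i$, no boundary terms), and $\mu'_i(\lambda'_i-\mu'_i)\leq \lambda_i'^2/4$ bounds each term of the sum by $p^{(\lambda'|\lambda')/4}$. The genuine gap is your treatment of the number of subpartitions: you claim it is ``$O_{p,\ell}(1)$'' because it is ``bounded purely in terms of $\ell$ and the $m_i$'', and that the constant ``may absorb them''. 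It may not. The implied constant in \eqref{polynomial_C} is allowed to depend on $p$ and $\ell$ only, not on $\lambda$ (that is, not on the multiplicities $m_i$); otherwise the statement is vacuous, since for each fixed $\lambda$ the left-hand side is a single finite number. This uniformity is precisely what is used later: in the proof of Theorem~\ref{general_clgp} the proposition is applied with $\lambda$ varying to get \eqref{inegalite_sha} uniformly in $X$ and $r$, and again with $\lambda=r$ running over all partitions to conclude $0\leq x_r\leq c_0\,p^{-(r|r)/2}$ with $c_0$ an absolute constant. And the number of subpartitions really does grow with $\lambda$: for $\lambda=1^m$ it equals $m+1$.

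Fortunately your own estimate has exactly the slack needed, and the paper closes the argument this way: since $\mu\subseteq\lambda$ if and only if $\mu'\subseteq\lambda'$, and $\lambda'$ has at most $\ell$ nonzero parts, the number of subpartitions is at most $(\lambda'_1+1)\cdots(\lambda'_\ell+1)\leq\left(1+|\lambda|/\ell\right)^\ell=O_\ell(|\lambda|^\ell)$ by the arithmetic--geometric mean inequality; moreover $(\lambda'|\lambda')\geq|\lambda|^2/\ell$ by Cauchy--Schwarz, so $|\lambda|^\ell=O_{p,\ell}\bigl(p^{(\lambda'|\lambda')/4}\bigr)$, and the whole sum is $O_{p,\ell}\bigl(p^{(\lambda'|\lambda')/4}|\lambda|^\ell\bigr)=O_{p,\ell}\bigl(p^{(\lambda'|\lambda')/2}\bigr)$ with a constant depending only on $p$ and $\ell$. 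A smaller point: your justification of $\qbi{n}{k}{p}=O_p\bigl(p^{k(n-k)}\bigr)$ by ``bounded many terms'' is not right, as the polynomial $\qbi{n}{k}{q}$ has $k(n-k)+1$ monomials, which is unbounded; the correct argument, used in the paper, is the identity $\qbi{n}{k}{p}=p^{k(n-k)}(1/p;1/p)_n/\bigl((1/p;1/p)_k(1/p;1/p)_{n-k}\bigr)$ together with $(1/p;1/p)_\infty\leq(1/p;1/p)_k\leq1$, which gives the uniform constant $(1/p;1/p)_\infty^{-2}$ per factor (and there are at most $\ell$ nontrivial factors).
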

\begin{proof}
Set $C_\lambda:= \sum_{\mu \subseteq \la} C_{\la,\mu}(p)p^{-u|\mu|}$. The equality  
$$
\qbi{n}{k}{p}= \frac{(p;p)_n}{(p;p)_k (p;p)_{n-k}} = p^{k(n-k)} \frac{(1/p;1/p)_n}{(1/p;1/p)_k (1/p;1/p)_{n-k}},
$$
and $(1/p;1/p)_{\infty} \leq(1/p;1/p)_k\leq 1$, together with the expression \eqref{coeff} of the coefficients $C_{\la,\mu}(p)$, imply that
$$
C_\lambda \leq C \sum_{\mu \subseteq \lambda} p^{\sum_i \mu'_i(\lambda'_i-\mu'_i)} \leq C\, p^{\sum_i \lambda_i'^2/4} \sum_{\mu \subseteq \lambda} 1,
$$
for some constant $C$ depending only on $p$ and $\ell$ (we used $\mu'_i(\lambda'_i-\mu'_i) \leq \lambda_i'^2/4$ for all $0\leq \mu_i' \leq \lambda'_i$, 
noting that $\mu \subseteq \lambda$ if and only if $\mu' \subseteq \lambda'$).
Now, since $\ell$ is fixed, the number of sub-partitions $\mu \subseteq \lambda$ is certainly bounded by the product 
$(\lambda_1+1)(\lambda_2+1)\cdots (\lambda_\ell+1)$. By the arithmetico-geometric mean inequality, we obtain 
$$
\sum_{\mu \subseteq \lambda} 1 \leq \left(1+\frac{|\lambda|}{\ell}\right)^\ell = O_\ell(|\lambda|^\ell).
$$
Finally, we have
$
C_\lambda = O_{p,\ell}(p^{(\lambda'|\lambda')/4} |\lambda|^\ell) = O_{p,\ell}(p^{(\lambda'|\lambda')/2}).
$
\end{proof}
~\\
{\bf Remark.} As it can be seen in the proof of the above proposition, we have the more precise upper bound 
$C_\lambda=O_{p,\ell}(p^{(\lambda'|\lambda')/4}|\lambda|^\ell)$. Nevertheless, the upper bound given in the proposition will be sufficient for our application. 
\medskip
~\\
\begin{proof}[Proof of Theorem~\ref{general_clgp}]
For $X\geq 1$ and $r:=r_1\geq r_2 \geq \cdots \geq r_\ell \geq 0$,  set 
\begin{eqnarray*}
N(X)  & := &|\{K \in {\mathcal K} \colon |\disc(K)|\leq X\}|, \mbox{ and } \\
N(X,r)& := &|\{K \in {\mathcal K} \colon |\disc(K)| \leq X \mbox{ and } \rk_{p^i}(\cl(K))=r_i\, \mbox{ for all } i=1,\dots,\ell \}|.
\end{eqnarray*}
For every $\lambda=1^{m_1}2^{m_2}\cdots \ell^{m_\ell}$, the assertion of Theorem \ref{general_clgp} implies that 
\begin{equation}\label{eq_1}
\sum_{r} \frac{N(X,r)}{N(X)}p^{m_1r_1+m_2(r_1+r_2)+\cdots + m_\ell(r_1+r_2+\cdots + r_\ell)} = \sum_{\mu\subseteq \lambda} C_{\lambda,\mu}(p)p^{-|\mu|u} +o_{\lambda}(1),
\end{equation}
where the sum on the left  is over all integer partitions $r=r_1\geq r_2 \geq \cdots \geq r_\ell \geq 0$. Note that 
$m_1r_1+m_2(r_1+r_2)+\cdots + m_\ell(r_1+r_2+\cdots + r_\ell) = (\lambda' | r)$. Hence, 
equation~\eqref{eq_1} can be written as
\begin{equation}\label{eq_2}
\sum_{r} \frac{N(X,r)}{N(X)}p^{(\lambda'|r)} = \sum_{\mu\subseteq \lambda} C_{\lambda,\mu}(p)p^{-|\mu|u} +o_{\lambda}(1), \quad (X\rightarrow \infty).
\end{equation}
For each integer partition $r$, the sequence $n\mapsto N(n,r)/N(n)$ is a real sequence in the compact set $[0,1]$. We deduce that there exists a
real number $d_r \in [0,1]$ and an infinite subset ${\mathcal M}$ of $\mathbb{N}$ such that 
$$
N(m,r)/N(m) \rightarrow d_r \quad (m\in {\mathcal M}, m \rightarrow \infty).
$$
Replacing $m_i$ by $2m_i+1$, we see from \eqref{eq_1} and Proposition~\ref{estimation_1} that 
\begin{equation}\label{inegalite_sha}
\frac{N(X,r)}{N(X)} \ll_\lambda p^{-(2m_1+1)r_1-(2m_2+1)(r_1+r_2)-\cdots -(2m_\ell+1)(r_1+\cdots+r_\ell)},  
\end{equation}
uniformly in $X$ and $r$, from which we deduce that 
$$
\sum_r \frac{N(m,r)}{N(m)} p^{(\lambda' | r)} = O_\lambda(1).
$$
Hence by Lebesgue's Dominated Convergence Theorem we have 
$$
\sum_r d_r p^{(\lambda'|r)} = \sum_{\mu\subseteq \lambda} C_{\lambda,\mu}(p)p^{-|\mu|u}.
$$
If we consider the infinite multi-dimensional system
\begin{equation}\label{systeme}\tag{$\mathcal S$}
 \sum_r x_r p^{(\lambda'|r)} = \sum_{\mu\subseteq \lambda} C_{\lambda,\mu}(p)p^{-|\mu|u} \mbox{ for all } \lambda=1^{m_1}\cdots \ell^{m_\ell},
\end{equation}
where the unknowns are $x_r\geq 0$ then $d_r$ is a solution of \eqref{systeme}. We already know that 
$$
x_r = \frac{(1/p^{\mu_\ell+u+1};1/p)_\infty}{p^{\mu_1^2+\cdots + \mu_\ell^2+u(\mu_1+\cdots + \mu_\ell)}  \prod_{j=1}^{\ell} (1/p;1/p)_{\mu_j-\mu_{j+1}}}
$$
is a solution (see \cite{delaunay_jouhet}, Theorem 14 or the equality just before this theorem). Therefore we need to prove that there exists at most one solution to the system.~\medskip
~\\
Let $(x_r)_r$ be a solution of \eqref{systeme}. Since $\lambda \mapsto \lambda'$ is a bijection, the system is equivalent to 
$$
\sum_r x_r p^{(\lambda|r)} = C_{\lambda'} \mbox{ for all } \lambda,
$$
where $C_{\lambda'}(p) = \sum_{\mu\subseteq \lambda'} C_{\lambda',\mu}(p)p^{-|\mu|u} = O(p^{(\lambda|\lambda)/2})$ by 
Proposition~\ref{estimation_1}. From $x_r\geq 0$, we deduce $x_r= O(p^{-(\lambda|r)+(\lambda|\lambda)/2})$, so when $\lambda=r$, we get
$$
0\leq x_r \leq c_0 p^{-(r|r)/2},
$$
for some absolute constant $c_0$. Now, if $(x'_r)_r$ is another solution of \eqref{systeme}, then setting $w_r=x_r-x'_r$, we have a function 
\begin{equation}\label{f}
f(\underline{z}) = f(z_1,z_2,\dots, z_\ell) = \sum_r w_r z_1^{r_1} z_2^{r_2}\cdots z_\ell^{r_\ell}
\end{equation}
satisfying $f(\underline{z})=0$ if $z_1=p^{m_1}, z_2=p^{m_2}, \dots, z_\ell=p^{m_\ell}$ for all $m_1, m_2,\dots, m_\ell \in \N$, and 
$$
|w_r| \leq 2c_0 p^{-(r|r)/2}.
$$
Thus we can apply Corollary~\ref{coro_2} to conclude that $x_r=x'_r$. So, as $X\rightarrow \infty$, the sequence $N(X,r)/N(X)$ has only one limit point, which is $d_r=x_r$.
\end{proof}

\begin{corollary}\label{coro1}
Let $u$ be a nonnegative integer and let $\ell$ be a positive integer. Assume 
that for every integer partition $\lambda~=~1^{m_1}2^{m_2}\cdots \ell^{m_\ell}$, as $K$ is varying over ${\mathcal K}$, the average of $|\cl(K)[p]|^{m_1} \cdots |\cl(K)[p^\ell]|^{m_\ell}$ is equal to $\sum_{\mu \subseteq \la} C_{\la,\mu}(p)p^{-u|\mu|}$. Then for $k\in \N$, as  $K$ is varying over 
${\mathcal K}$, the probability that $\rk_{p^\ell}(\cl(K))=k$ is equal to
$$
\frac{(1/p^{u+k+1};1/p)_\infty}{(1/p;1/p)_k p^{\ell k (u+k)}}\, Q_{1/p,\ell,1}\left(\frac{1}{p^{2k+u-1}}\right),
$$
where $\displaystyle Q_{q,\ell,1}(x):=\sum_{n\geq 0} \frac{(-1)^n x^{\ell n} q^{n(n+1)(2\ell+1)/2-n}(1-xq^{2n+1})}{(q;q)_n(xq^{n+1};q)_\infty}$.
\end{corollary}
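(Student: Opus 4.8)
The plan is to deduce the law of the single $p^\ell$-rank from the \emph{joint} law of $\bigl(\rk_{p}(\cl(K)),\dots,\rk_{p^\ell}(\cl(K))\bigr)$ already established in Theorem~\ref{general_clgp}, by summing over all profiles whose last coordinate is fixed equal to $k$. Indeed, the event $\{\rk_{p^\ell}(\cl(K))=k\}$ is the disjoint union, over all integer partitions $\mu_1\geq\cdots\geq\mu_{\ell-1}\geq\mu_\ell=k$, of the events $\{\rk_{p^j}(\cl(K))=\mu_j,\ 1\leq j\leq\ell\}$. First I would check that the density of this union equals the sum of the individual densities. Each summand $N(X,\mu)/N(X)$ tends to the explicit limit $d_\mu$ furnished by Theorem~\ref{general_clgp}; moreover, applying the uniform estimate~\eqref{inegalite_sha} with a partition $\lambda$ all of whose multiplicities are large shows that $N(X,\mu)/N(X)\ll p^{-c|\mu|}$ for some $c>0$, uniformly in $X$ and $\mu$. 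Hence the tail of the sum over $\{\mu_\ell=k\}$ is uniformly negligible, so one may pass to the limit term by term and obtain $\prob(\rk_{p^\ell}(\cl(K))=k)=\sum_{\mu_\ell=k}d_\mu$.

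The second step is elementary bookkeeping. Writing $q=1/p$, Theorem~\ref{general_clgp} gives $d_\mu=\frac{(q^{\mu_\ell+u+1};q)_\infty\, q^{\mu_1^2+\cdots+\mu_\ell^2+u(\mu_1+\cdots+\mu_\ell)}}{\prod_{j=1}^{\ell}(q;q)_{\mu_j-\mu_{j+1}}}$. When $\mu_\ell=k$ the factor $(q^{k+u+1};q)_\infty$ is constant, the last factor of the product is $(q;q)_{k}$, and the contribution $q^{k^2+uk}$ of the fixed coordinate factors out. After the shift $\nu_j:=\mu_j-k$ (so that $\nu_1\geq\cdots\geq\nu_{\ell-1}\geq\nu_\ell:=0$ and $\mu_j-\mu_{j+1}=\nu_j-\nu_{j+1}$ for every $j$), the constant term $(\ell-1)(k^2+uk)$ coming from expanding $\sum_{j<\ell}\bigl((\nu_j+k)^2+u(\nu_j+k)\bigr)$ combines with $q^{k^2+uk}$ into the overall factor $q^{\ell k(u+k)}$, and I would arrive at
$$
\prob(\rk_{p^\ell}(\cl(K))=k)=\frac{(q^{k+u+1};q)_\infty}{(q;q)_{k}}\,q^{\ell k(u+k)}\sum_{\nu_1\geq\cdots\geq\nu_{\ell-1}\geq0}\frac{q^{\nu_1^2+\cdots+\nu_{\ell-1}^2}\,(q^{2k+u})^{\nu_1+\cdots+\nu_{\ell-1}}}{\prod_{j=1}^{\ell-1}(q;q)_{\nu_j-\nu_{j+1}}}.
$$
Since $q^{\ell k(u+k)}=1/p^{\ell k(u+k)}$, this already reproduces the prefactor of the claimed formula, and it remains only to identify the $(\ell-1)$-fold sum with $Q_{q,\ell,1}(q^{2k+u-1})$.

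The last and genuinely substantial step is the $q$-series identity
$$
\sum_{\nu_1\geq\cdots\geq\nu_{\ell-1}\geq0}\frac{q^{\nu_1^2+\cdots+\nu_{\ell-1}^2}\,y^{\nu_1+\cdots+\nu_{\ell-1}}}{\prod_{j=1}^{\ell-1}(q;q)_{\nu_j-\nu_{j+1}}}=Q_{q,\ell,1}(y/q),
$$
which upon taking $y=q^{2k+u}$ yields $Q_{q,\ell,1}(q^{2k+u-1})$ and closes the argument. This is precisely the kind of multiple-series evaluation established in \cite{delaunay_jouhet} (the results surrounding Theorem~14), so I would simply invoke it: it is the generating-function form of the Andrews--Gordon identities of modulus $2\ell+1$ in the first residue class, the $(\ell-1)$-fold series being the one attached to the Bailey chain, and the telltale factor $1-yq^{2n+1}$ appearing in $Q_{q,\ell,1}$ arising from iterating the Bailey lemma from the unit Bailey pair. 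I expect this identity --- not the disjoint decomposition nor the change of variables --- to be the real obstacle, which is exactly why it should be borrowed from \cite{delaunay_jouhet} rather than reproved here.
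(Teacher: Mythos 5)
Your proposal is correct and follows essentially the same route as the paper: decompose the event $\{\rk_{p^\ell}(\cl(K))=k\}$ into the disjoint events indexed by partitions with $\mu_\ell=k$, pass to the limit termwise via the uniform bound~\eqref{inegalite_sha} (the paper phrases this as Lebesgue's Dominated Convergence Theorem), shift all indices by $k$ to extract the prefactor $(1/p^{u+k+1};1/p)_\infty/\bigl((1/p;1/p)_k\,p^{\ell k(u+k)}\bigr)$, and identify the remaining $(\ell-1)$-fold sum with $Q_{1/p,\ell,1}(1/p^{2k+u-1})$ by quoting a known evaluation. The only discrepancy is bibliographic: the paper cites \cite[Proposition 13]{delaunay3} (equivalently \cite[Eq.\ (2.5)]{andrews}) for that final Andrews-type identity, rather than the results around Theorem~14 of \cite{delaunay_jouhet}.
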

\noindent
The series $Q_{q,\ell,k}(x) $ was defined by Andrews (see \cite{andrews}). The formula of the above corollary is the $u$-probability that the 
$p^\ell$-rank of a finite abelian $p$-group is equal to $k$, as obtained in \cite{cohen3} (note that we use the definition of $u$-averages and $u$-probabilities 
of this article).
\begin{proof}
We define as before $N(X)$ and $N(X,r)$. Moreover, set
$$
N(X,\ell,k)=|\{K \in {\mathcal K} \colon |\disc(K)| \leq X \mbox{ and } \rk_{p^\ell}(\cl(K))=k \}|.
$$
We have 
\begin{equation}\label{inter}
\frac{N(X,\ell,k)}{N(X)} = \sum_{\mu_1\geq \mu_2 \geq \cdots \geq \mu_{\ell-1} \geq k} \frac{N(X,\mu)}{N(X)},
\end{equation}
where the sum is over integer partitions $\mu=\mu_1\geq \mu_2 \geq \cdots \geq \mu_{\ell-1} \geq \mu_\ell=k$. By the assumptions and 
Theorem~\ref{general_clgp}, we have 
$$
\lim_{X\rightarrow \infty} \frac{N(X,\mu)}{N(X)} = \frac{(1/p^{u+\mu_\ell+1};1/p)_\infty}{p^{\mu_1^2+\cdots + \mu_\ell^2+u(\mu_1+\cdots + \mu_\ell)}  \prod_{j=1}^{\ell} (1/p;1/p)_{\mu_j-\mu_{j+1}}}.
$$
In equation~\eqref{inter} we take the limit as $X \rightarrow \infty$ and use Lebesgue's Dominated Convergence Theorem (with 
equation~\eqref{inegalite_sha}) to obtain that the probability that $\rk_{p^\ell}(\cl(K))=k$ is equal to
\begin{multline*}
\sum_{\mu_1\geq \cdots \geq \mu_{\ell-1} \geq \mu_\ell=k} \frac{(1/p^{u+\mu_\ell+1};1/p)_\infty}{p^{\mu_1^2+\cdots + \mu_\ell^2+u(\mu_1+\cdots + \mu_\ell)}  \prod_{j=1}^{\ell} (1/p;1/p)_{\mu_j-\mu_{j+1}}} \\ \\
=\frac{(1/p^{u+k+1};1/p)_\infty}{p^{\ell k(u+k)}(1/p;1/p)_k} \sum_{\mu_1\geq \cdots  \geq \mu_{\ell-1}\geq 0} \frac{(1/p)^{\mu_1^2+\cdots + \mu_{\ell-1}^2+(u+2k)(\mu_1+\cdots + \mu_{\ell-1})}}{  \prod_{j=1}^{\ell-1} (1/p;1/p)_{\mu_j-\mu_{j+1}}},
\end{multline*}
the equality being derived by shifting all indices by $k$.
Now, from \cite[Proposition 13]{delaunay3} (or \cite[Eq. (2.5)]{andrews}), the last sum is exactly
$$
Q_{1/p,\ell,1}\left(\frac{1}{p^{2k+u-1}}\right),
$$
as expected (note that $Q_{q,1,1}(x)=1$).
\end{proof}

\section{Tate-Shafarevich groups of elliptic curves}
In this section, we prove Theorem~\ref{general_sha} and we follow the previous one. We just need an upper bound for the coefficients 
$\sum_{\mu \subseteq \la} C_{\lambda,\mu}(p^2)p^{-|\mu|(2u-1)}$, which is given in the following result.
\begin{proposition}\label{estimation_2}
Let $u\in \N$ and $\ell \in \N^*$. For all $\lambda=1^{m_1}2^{m_2}\cdots \ell^{m_\ell}$, we have 
$$
\sum_{\mu \subseteq \la}  C_{\lambda,\mu}(p^2)p^{-|\mu|(2u-1)}  = O_{p,\ell}(p^{(\lambda' | \lambda')}).
$$
\end{proposition}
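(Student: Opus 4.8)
The plan is to follow the proof of Proposition~\ref{estimation_1} almost verbatim, replacing the base $p$ by $p^2$ in every $q$-binomial coefficient and accommodating the new weight $p^{-|\mu|(2u-1)}$ in place of $p^{-u|\mu|}$. Set $C_\lambda := \sum_{\mu \subseteq \la} C_{\lambda,\mu}(p^2)p^{-|\mu|(2u-1)}$. Starting from the expression \eqref{coeff} with $p$ replaced by $p^2$, I would apply the identity
$$
\qbi{n}{k}{p^2} = p^{2k(n-k)}\frac{(1/p^2;1/p^2)_n}{(1/p^2;1/p^2)_k (1/p^2;1/p^2)_{n-k}}
$$
together with the bounds $(1/p^2;1/p^2)_\infty \leq (1/p^2;1/p^2)_m \leq 1$. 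Since $\lambda'$ has at most $\lambda_1 \leq \ell$ nonzero parts, the product over $i$ of the resulting $q$-factorial quotients is bounded by an absolute constant $C=C(p,\ell)$, so only the power of $p$ matters. Exactly as in Proposition~\ref{estimation_1}, writing $k=\lambda'_i-\mu'_i$ and $n-k=\mu'_i-\mu'_{i+1}$ in each factor, the two sources of $p$-powers combine and the total exponent telescopes to $2\sum_i \mu'_i(\lambda'_i-\mu'_i)$, i.e. exactly twice the exponent appearing in the proof of Proposition~\ref{estimation_1}.

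Next I would deal with the weight $p^{-|\mu|(2u-1)}$. The only genuinely new point compared with Proposition~\ref{estimation_1} is that for $u=0$ this factor equals $p^{|\mu|}$ and hence grows, whereas for $u\geq 1$ it is at most $1$. To treat all $u\in\N$ uniformly I would simply use $p^{-|\mu|(2u-1)}\leq p^{|\mu|}$. Combining this with the bound above and using $\mu'_i(\lambda'_i-\mu'_i)\leq(\lambda'_i)^2/4$ together with $|\mu|=\sum_i\mu'_i\leq|\lambda|$, each summand is at most
$$
C\,p^{2\sum_i \mu'_i(\lambda'_i-\mu'_i)+|\mu|}\leq C\,p^{(\lambda'|\lambda')/2+|\lambda|}.
$$

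Finally, as in Proposition~\ref{estimation_1}, the number of subpartitions $\mu\subseteq\lambda$ is $O_\ell(|\lambda|^\ell)$ by the arithmetico-geometric mean inequality, so $C_\lambda=O_{p,\ell}\bigl(p^{(\lambda'|\lambda')/2+|\lambda|}|\lambda|^\ell\bigr)$. It remains to absorb the lower-order factor $p^{|\lambda|}|\lambda|^\ell$ into the target exponent. Here I would invoke the elementary estimate $(\lambda'|\lambda')\geq|\lambda|^2/\ell$, which follows from Cauchy--Schwarz since $\lambda'$ has at most $\ell$ parts summing to $|\lambda|$. Because $(\lambda'|\lambda')/2\geq|\lambda|^2/(2\ell)$ grows faster than the linear-in-$|\lambda|$ quantity $|\lambda|+\ell\log_p|\lambda|$, the product $p^{|\lambda|}|\lambda|^\ell$ is $O_{p,\ell}(p^{(\lambda'|\lambda')/2})$, and I conclude $C_\lambda=O_{p,\ell}(p^{(\lambda'|\lambda')})$. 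I expect no serious obstacle: the argument is structurally identical to Proposition~\ref{estimation_1}, the doubling of the base $p\mapsto p^2$ merely doubling the target exponent from $(\lambda'|\lambda')/2$ to $(\lambda'|\lambda')$. The one point requiring care is the case $u=0$, where the weight contributes positively to the exponent; the reason this causes no trouble is precisely that the claimed bound is so generous that there is ample room to absorb both the subpartition count and the $u=0$ growth.
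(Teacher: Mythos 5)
Your proof is correct and follows essentially the same route as the paper: bound the weight by $p^{-|\mu|(2u-1)}\leq p^{|\mu|}\leq p^{|\lambda|}$, run the argument of Proposition~\ref{estimation_1} in base $p^2$ (doubling the telescoped exponent) to reach the intermediate bound $O_{p,\ell}(p^{(\lambda'|\lambda')/2+|\lambda|}|\lambda|^\ell)$, and then absorb the lower-order factors. The only difference is that you spell out the final absorption step via $(\lambda'|\lambda')\geq|\lambda|^2/\ell$, which the paper leaves implicit.
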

\begin{proof}
We have 
\begin{eqnarray*}
\sum_{\mu \subseteq \la}  C_{\lambda,\mu}(p^2)p^{-|\mu|(2u-1)} &=& \sum_{\mu \subseteq \la} p^{2(\sum_i \mu_{i+1}'(\lambda_i'-\mu_i'))} \prod_i \qbi{\lambda_i'-\mu_{i+1}'}{\lambda_i'-\mu_i'}{p^2} p^{-|\mu|(2u-1)} \\
&\leq& p^{|\lambda|} \sum_{\mu \subseteq \la}  p^{2(\sum_i \mu_{i+1}'(\lambda_i'-\mu_i'))} \prod_i \qbi{\lambda_i'-\mu_{i+1}'}{\lambda_i'-\mu_i'}{p^2}.
\end{eqnarray*}
Using the same method as in the proof of Proposition~\ref{estimation_1}, we obtain 
\begin{equation}\label{proba_best}
\sum_{\mu \subseteq \la}  C_{\lambda,\mu}(p^2)p^{-|\mu|(2u-1)} = O_{p,\ell}(p^{(\lambda'|\lambda')/2 + |\lambda|} |\lambda|^\ell),
\end{equation}
which implies the upper bound given in the proposition.
\end{proof}
\medskip
\noindent
\begin{proof}[Proof of Theorem~\ref{general_sha}]
For $r:=r_1\geq r_2 \geq \cdots \geq r_\ell \geq 0$, we denote by $2r$ the integer partition $2r:=2r_1\geq 2r_2 \geq \cdots \geq 2r_\ell$. For $X\geq 1$, set 
\begin{eqnarray*}
N(X)  & := &|\{E \in {\mathcal F} \colon N_E\leq X\}|, \mbox{ and } \\
N(X,r)& := &|\{E \in {\mathcal F} \colon N_E \leq X \mbox{ and } \rk_{p^i}(\Sha(E))=r_i\,\mbox{ for all } i=1,\dots,\ell \}|.
\end{eqnarray*}
Note that since $\Sha(E)$ is a group of type S then $\rk_{p^j}(\Sha(E))$ must be even. Hence if one of the $r_j$'s is odd, then 
$N(X,r)=0$ for all $X$. 
So, for any $\lambda=1^{m_1}2^{m_2}\cdots \ell^{m_\ell}$, the assertion of Theorem \ref{general_sha} implies that 
\begin{equation}\label{eq_3}
\sum_{r} \frac{N(X,2r)}{N(X)}p^{(\lambda' | 2r)} = \sum_{\mu\subseteq \lambda} C_{\lambda,\mu}(p^2)p^{-|\mu|(2u-1)} +o_{\lambda}(1),
\end{equation}
where the sum is over all integer partition $r=r_1\geq \cdots \geq r_\ell$. As before, we will prove that the sequence $N(X,2r)/N(X)$ has only one limit point 
as $X \rightarrow \infty$. We are led to consider the system
\begin{equation}\label{systeme_2}\tag{$\mathcal T$}
 \sum_r x_{2r} p^{(\lambda'|2r)} =  \sum_{\mu\subseteq \lambda} C_{\lambda,\mu}(p^2)p^{-|\mu|(2u-1)} \mbox{ for all } \lambda=1^{m_1}\cdots \ell^{m_\ell},
\end{equation}
where the unknowns are $x_{2r}\geq 0$. If $e_{2r}$ is a limit point of $N(X,2r)/N(X)$, then $e_{2r}$ is a solution of \eqref{systeme_2}. 
We already know that 
$$
x_{2r}=\frac{(1/p^{2u+2\mu_\ell+1};1/p^2)_\infty}{p^{2(\mu_1^2+\cdots + \mu_\ell^2)+(2u-1)(\mu_1+\cdots + \mu_\ell)} \prod_{j=1}^{\ell} (1/p^2;1/p^2)_{\mu_j-\mu_{j+1}}}
$$
is a solution (see \cite{delaunay_jouhet}, remark after Theorem 14). If $x_{2r}$ is a solution 
of \eqref{systeme_2}, then we have 
$$
\sum_r x_{2r} p^{2(\lambda|r)} = \sum_{\mu\subseteq \lambda} C_{\lambda',\mu}(p^2)p^{-|\mu|(2u-1)}  \mbox{ for all } \lambda,
$$
where $\sum_{\mu\subseteq \lambda} C_{\lambda',\mu}(p^2)p^{-|\mu|(2u-1)} = O(p^{(\lambda|\lambda)})$, from which we deduce
$$
x_{2r} \ll p^{(r|r)-2(r|r)} \ll p^{-(r|r)} = (p^{2})^{-(r|r)/2}.
$$
Therefore Corollary~\ref{coro_2} with $a=p^2$ gives the unicity. 
\end{proof}
\begin{corollary}
Let $u$ be a nonnegative integer, and let $\ell$ be a positive integer. Assume that for 
any $\lambda=1^{m_1}2^{m_2}\cdots \ell^{m_\ell}$, as $E/\Q$, ordered by conductors, is varying over ${\mathcal F}_u$, 
the average of $|\Sha(E)[p]|^{m_1} |\Sha(E)[p^2]|^{m_2} \cdots |\Sha(E)[p^\ell]|^{m_\ell}$ is equal to 
$\sum_{\mu \subseteq \la} C_{\lambda,\mu}(p^2)p^{-|\mu|(2u-1)}$. Then for $k\in \N$, as $E/\Q$ is varying over ${\mathcal F}_u$, the probability that 
$\rk_{p^\ell}(\Sha(E))=2k$ is equal to
$$
\frac{(1/p^{2u+2k+1};1/p^2)_\infty}{(1/p^2;1p^2)_k p^{\ell k(2u+2k-1)}}\, Q_{1/p^2,\ell,1}(1/p^{4k+2u-3}).
$$
\end{corollary}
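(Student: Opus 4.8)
The plan is to follow the proof of Corollary~\ref{coro1} essentially verbatim, replacing the base $1/p$ by $1/p^2$ in every $q$-series and accounting for the fact that the $p^j$-ranks of a group of type S are even. First I would set, for $k\in\N$,
$$
N(X,\ell,2k)=|\{E\in{\mathcal F}_u\colon N_E\leq X\mbox{ and }\rk_{p^\ell}(\Sha(E))=2k\}|,
$$
and reuse $N(X)$ and $N(X,2r)$ from the proof of Theorem~\ref{general_sha}. Since each $\rk_{p^i}(\Sha(E))$ is even, the event $\rk_{p^\ell}(\Sha(E))=2k$ decomposes according to the whole rank profile $(\rk_{p^i}(\Sha(E)))_i=(2\mu_i)_i$, giving
$$
\frac{N(X,\ell,2k)}{N(X)}=\sum_{\mu_1\geq\cdots\geq\mu_{\ell-1}\geq k}\frac{N(X,2\mu)}{N(X)},
$$
where the sum is over integer partitions $\mu=\mu_1\geq\cdots\geq\mu_{\ell-1}\geq\mu_\ell=k$ and $2\mu$ denotes the doubled partition.

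Next I would pass to the limit. By the hypothesis and Theorem~\ref{general_sha}, each term $N(X,2\mu)/N(X)$ converges, as $X\to\infty$, to the probability displayed there; the uniform upper bound obtained in the proof of Theorem~\ref{general_sha} (the analogue of \eqref{inegalite_sha}, with $p$ replaced by $p^2$) provides an integrable dominating function, so Lebesgue's Dominated Convergence Theorem yields the limit of $N(X,\ell,2k)/N(X)$ as the corresponding infinite sum of those probabilities.

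It then remains a purely $q$-series manipulation. I would factor out of the sum the quantities independent of $\mu_1,\dots,\mu_{\ell-1}$, namely $(1/p^{2u+2k+1};1/p^2)_\infty$, the factor $(1/p^2;1/p^2)_k$ coming from $\mu_\ell=k$, and the contribution $p^{-2k^2-(2u-1)k}$ of the part $\mu_\ell=k$ to the exponent. Shifting every remaining index by $k$ (put $\nu_j=\mu_j-k$ for $1\leq j\leq\ell-1$) turns $2\sum_{j=1}^{\ell-1}\mu_j^2+(2u-1)\sum_{j=1}^{\ell-1}\mu_j$ into $2\sum\nu_j^2+(4k+2u-1)\sum\nu_j$ up to a $k$-dependent constant, and collecting all the $k$-dependent powers of $p$ produces the prefactor $p^{-\ell k(2u+2k-1)}$ multiplying $(1/p^{2u+2k+1};1/p^2)_\infty/(1/p^2;1/p^2)_k$.

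Finally, the residual sum takes the form
$$
\sum_{\nu_1\geq\cdots\geq\nu_{\ell-1}\geq0}\frac{(1/p^2)^{\sum\nu_j^2}\,p^{-(4k+2u-1)\sum\nu_j}}{\prod_{j=1}^{\ell-1}(1/p^2;1/p^2)_{\nu_j-\nu_{j+1}}}\qquad(\nu_\ell=0),
$$
which by \cite[Proposition 13]{delaunay3} (equivalently \cite[Eq.~(2.5)]{andrews}) equals $Q_{1/p^2,\ell,1}$ at the appropriate argument. The only point demanding care is the exponent bookkeeping: writing the linear term as $(xq)^{\sum\nu_j}$ with $q=1/p^2=p^{-2}$ forces $xq=p^{-(4k+2u-1)}$, hence $x=1/p^{4k+2u-3}$, which is exactly the claimed value $Q_{1/p^2,\ell,1}(1/p^{4k+2u-3})$. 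No genuine obstacle arises beyond this arithmetic, since the analytic core (unicity of the limit point) has already been established in the proof of Theorem~\ref{general_sha} and the combinatorial identification mirrors Corollary~\ref{coro1}.
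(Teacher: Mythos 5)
Your proposal is correct and is exactly the paper's approach: the paper's proof of this corollary literally reads ``We proceed as for the proof of Corollary~\ref{coro1},'' and your write-up carries out that adaptation (decomposition over rank profiles $2\mu$ with $\mu_\ell=k$, dominated convergence via the analogue of \eqref{inegalite_sha}, index shift by $k$, and identification with $Q_{1/p^2,\ell,1}$). Your exponent bookkeeping, including the prefactor $p^{-\ell k(2u+2k-1)}$ and the argument $1/p^{4k+2u-3}$ forced by $xq=p^{-(4k+2u-1)}$ with $q=1/p^2$, checks out.
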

\begin{proof}
We proceed as for the proof of Corollary \ref{coro1}.
\end{proof}
\noindent
The formula in the above corollary is the $u$-probability that the 
$p^\ell$-rank of a finite abelian $p$-group of type S is equal to $2k$, as obtained in \cite{delaunay3}.

\section{Selmer groups of elliptic curves}\label{Selmer}
In the proof of Theorem \ref{general_sha}, it is essential to use the fact that $\Sha(E)$ is a group of type~S and that $\rk_{p^j}(\Sha(E))$ is even, 
since on the left hand side of \eqref{eq_3} the sum involves partitions with only even parts $\mu_j$. Nevertheless, one can ask what should be 
the $p^j$-rank probability laws for other families of groups, if we assume that their moments are given as in Conjecture \ref{moment_sha}. 
This question can be naturally asked in particular for Selmer groups of elliptic curves (or more precisely for the $p$-primary parts of the Selmer groups). 
If $E$ is an elliptic curve defined over $\Q$, then we denote by $S(E)$ the $p$-primary part of its Selmer group. It is the inductive limit of 
the $p^n$-Selmer group $S(E)_{p^n}$ of $E$:
$$
S(E) = \lim_{\longrightarrow} S(E)_{p^n}.
$$
We have the exact sequence
$$
0 \rightarrow E(\Q)\otimes \Q_p/\Z_p \rightarrow S(E) \rightarrow \Sha(E)[p^\infty] \rightarrow 0,
$$
which can be seen as the limit of 
$$
0 \rightarrow E(\Q)/p^nE(\Q) \rightarrow S(E)_{p^n} \rightarrow \Sha(E)[p^n] \rightarrow 0.
$$
We assume for simplicity that $E(\Q)_{\rm{tors}}$ is trivial: it is not a restriction since we are considering averaging over elliptic curves and on 
average elliptic curves have trivial rational torsion. The Selmer group $S(E)$ can be an infinite group, nevertheless its subgroup of $p^n$-torsion 
points is finite and we have 
$$
S(E)[p^n] = S(E)_{p^n}.
$$
We define the $p^j$-rank of $S(E)$ by $\rk_{p^j}(S(E))=\rk_{p^j}(S(E)[p^j])$. Note that we have $\rk_{p^j}(S(E))=\rk_{p^j}(S(E)[p^k])$ for all $k\geq j$.
~\\
Since $\Sha[p^\infty]$ is finite by assumption, we have $S(E)\simeq (\Q_p/\Z_p)^{r(E)}$, where $r(E)$ is the rank of the Mordell-Weil group $E(\Q)$ and
$$
\rk_{p^j}(S(E)) = r(E) \quad \mbox{ for } p^j \mbox{ large enough.}
$$
Furthermore, we have 
$$
\rk_{p^j}(S(E)) \equiv r(E) \pmod{2},
$$
so the parities of $\rk_{p^j}(S(E))$ are determined by the parity of $r(E)$.\medskip
~\\
If $\ell$ is a positive integer and $\lambda~=~1^{m_1}2^{m_2}\cdots \ell^{m_\ell}$ is an integer partition, then 
$$
|S(E)[p]|^{m_1} |S(E)[p^2]|^{m_2} \cdots |S(E)[p^\ell]|^{m_\ell}
$$
is meaningful and we can consider the average value of this function as $E$ is varying over a family of elliptic curves.
The works of \cite{poonen-rains} suggests that the $p$-Selmer groups should behave in a ``global'' way independently of the rank 
of $E$ (except for the parity of the $p$-ranks). From \cite{delaunay_jouhet}, we can extract the following conjecture.
\begin{conjecture}\label{conj_selmer}
Let $\ell$ be a positive integer, and let $\lambda=1^{m_1}2^{m_2}\cdots \ell^{m_\ell}$ be an integer partition. As $E/\Q$, 
ordered by conductors, is varying over all elliptic curves, the average of 
$|S(E)[p]|^{m_1} |S(E)[p^2]|^{m_2} \cdots |S(E)[p^\ell]|^{m_\ell}$ is equal to
$$
\sum_{\mu \subseteq \lambda} C_{\lambda,\mu}(p^2)p^{|\mu|}.
$$
\end{conjecture}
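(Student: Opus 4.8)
The plan is to \emph{derive} Conjecture~\ref{conj_selmer} from the Tate--Shafarevich moment Conjecture~\ref{moment_sha} together with the strong rank conjecture, using the defining exact sequence that relates $S(E)$ to $\Sha(E)$. The whole point will then reduce to a duality symmetry of the subgroup-counting polynomials $C_{\lambda,\mu}$, and the formula $\sum_{\mu\subseteq\lambda}C_{\lambda,\mu}(p^2)p^{|\mu|}$ will emerge as the common value of the rank~$0$ and (shifted) rank~$1$ contributions.

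First I would convert Selmer moments into Sha moments. Taking $p^j$-torsion in $0\to E(\Q)\otimes\Q_p/\Z_p\to S(E)\to\Sha(E)[p^\infty]\to 0$ and using that $E(\Q)\otimes\Q_p/\Z_p\cong(\Q_p/\Z_p)^{r(E)}$ is $p$-divisible (so that its reduction mod $p^j$ vanishes), one obtains the short exact sequence $0\to(\Z/p^j\Z)^{r(E)}\to S(E)[p^j]\to\Sha(E)[p^j]\to 0$, whence
$$|S(E)[p^j]|=p^{j\,r(E)}\,|\Sha(E)[p^j]|.$$
Raising to the $m_j$-th power, multiplying over $1\leq j\leq\ell$, and using $\sum_j j\,m_j=|\lambda|$, this gives
$$\prod_{j=1}^\ell|S(E)[p^j]|^{m_j}=p^{r(E)\,|\lambda|}\prod_{j=1}^\ell|\Sha(E)[p^j]|^{m_j}.$$

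Next I would average over all curves by partitioning $\mathcal F=\bigcup_u\mathcal F_u$ according to rank. Writing $\rho_u$ for the density of $\mathcal F_u$ and $M_u:=\sum_{\mu\subseteq\lambda}C_{\lambda,\mu}(p^2)p^{-|\mu|(2u-1)}$ for the rank-$u$ Sha moment of Conjecture~\ref{moment_sha}, the average over all curves is $\sum_u\rho_u\,p^{u|\lambda|}M_u$. Under the strong rank conjecture one has $\rho_0=\rho_1=\tfrac{1}{2}$ and $\rho_u=0$ for $u\geq 2$, so only ranks $0$ and $1$ survive and the average collapses to
$$\tfrac{1}{2}\bigl(M_0+p^{|\lambda|}M_1\bigr)=\tfrac{1}{2}\Bigl(\sum_{\mu\subseteq\lambda}C_{\lambda,\mu}(p^2)p^{|\mu|}+p^{|\lambda|}\sum_{\mu\subseteq\lambda}C_{\lambda,\mu}(p^2)p^{-|\mu|}\Bigr).$$

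The decisive step is then the duality identity
$$\sum_{\mu\subseteq\lambda}C_{\lambda,\mu}(p^2)\,p^{|\mu|}=p^{|\lambda|}\sum_{\mu\subseteq\lambda}C_{\lambda,\mu}(p^2)\,p^{-|\mu|},$$
which forces the two terms above to coincide, so that the average equals $M_0=\sum_{\mu\subseteq\lambda}C_{\lambda,\mu}(p^2)p^{|\mu|}$, precisely the claimed formula. To prove this identity I would use Pontryagin (annihilator) duality: for a finite module $M$ of type $\lambda$ over a discrete valuation ring with residue field of size $p^2$, the map $H\mapsto H^\perp$ is a length-reversing involution on the set of submodules, with $|H^\perp|=|M|/|H|$; since a submodule of type $\mu$ has length $|\mu|$ and is sent to one of length $|\lambda|-|\mu|$, the generating function satisfies $\sum_{\mu\subseteq\lambda}C_{\lambda,\mu}(p^2)x^{|\mu|}=x^{|\lambda|}\sum_{\mu\subseteq\lambda}C_{\lambda,\mu}(p^2)x^{-|\mu|}$ for every $x$, and specializing $x=p$ yields the claim. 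For $\lambda=1^n$ this is just the symmetry $\qbi{n}{k}{p^2}=\qbi{n}{n-k}{p^2}$ of the $q$-binomial coefficients.

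I expect the main obstacle to be conceptual rather than computational. The argument is only a heuristic derivation: it rests on two unproven inputs, namely Conjecture~\ref{moment_sha} for the Sha moments and the strong rank conjecture supplying $\rho_0=\rho_1=\tfrac{1}{2}$, which is exactly why the statement is recorded as a conjecture rather than a theorem. Within the combinatorics, the one genuinely substantive point is establishing the duality identity for arbitrary $\lambda$ (not merely $\lambda=1^n$); carrying this out cleanly as a polynomial identity via the annihilator bijection over a residue field of size $p^2$, rather than leaning on the $q$-binomial special case, is the step that will require the most care.
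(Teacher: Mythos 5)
The statement you were asked about is a \emph{conjecture}: the paper does not prove it, but records it as extracted from \cite{delaunay_jouhet} (``From \cite{delaunay_jouhet}, we can extract the following conjecture''), so there is no internal proof to compare against step by step, and your proposal is rightly framed as a conditional heuristic derivation rather than a proof. As such a derivation it is correct, and it is faithful to the machinery the paper actually uses. Your reduction $|S(E)[p^j]|=p^{j\,r(E)}\,|\Sha(E)[p^j]|$ is valid under the paper's standing assumptions (finiteness of $\Sha(E)[p^\infty]$ and, as in Section~\ref{Selmer}, triviality of $E(\Q)_{\mathrm{tors}}$ on average), and your collapse of the rank-stratified average to $\tfrac12\bigl(M_0+p^{|\lambda|}M_1\bigr)$ uses the rank conjecture in exactly the strong form the paper states in its discussion of Bhargava--Shankar: you correctly require that rank $\geq 2$ curves contribute nothing to the averages (not merely that they have density zero --- essential here, since the test functions $p^{r(E)|\lambda|}\prod_j|\Sha(E)[p^j]|^{m_j}$ are unbounded). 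Your decisive duality identity $\sum_{\mu\subseteq\lambda}C_{\lambda,\mu}(p^2)p^{|\mu|}=p^{|\lambda|}\sum_{\mu\subseteq\lambda}C_{\lambda,\mu}(p^2)p^{-|\mu|}$ is not new to the paper either: it is \cite[Theorem 1]{delaunay_jouhet}, and this paper invokes it explicitly in the proof of Proposition~\ref{selmer}, where the same computation runs in the reverse direction (checking that the shifted rank-one contribution equals the rank-zero value, i.e.\ that the conjectured Selmer moments are rank-independent, in line with the Poonen--Rains ``global'' behaviour the paper cites just before the conjecture). Your annihilator-duality argument for the identity is sound --- over the unramified quadratic extension of $\Z_p$, a DVR with residue field of cardinality $p^2$, the bijection $H\mapsto H^{\perp}$ with $|H^{\perp}|=|M|/|H|$ makes $\sum_{\mu\subseteq\lambda}C_{\lambda,\mu}(q)\,x^{|\mu|}$ palindromic, generalizing the symmetry $\qbi{n}{k}{q}=\qbi{n}{n-k}{q}$ --- though one could equally note it is a polynomial identity in $q$ and simply cite \cite{delaunay_jouhet}. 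In short, where the paper imports the conjecture wholesale, you reconstruct it as the common value of the rank-$0$ and rank-$1$ Sha contributions, which makes explicit the rank-independence that the paper only verifies a posteriori in Proposition~\ref{selmer}; the two routes are entirely consistent.
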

\medskip
~\\
If $\ell=1$, this conjecture is originally due to Poonen and Rains in~\cite{poonen-rains}, where they use a completely different model for Selmer group. 
\begin{proposition}\label{selmer}
Let $\ell$ be a positive integer and let $\delta \in \{0,1\}$. Assume that for any partition $\lambda=1^{m_1}2^{m_2}\cdots \ell^{m_\ell}$, the average of 
$|S(E)[p]|^{m_1} |S(E)[p^2]|^{m_2} \cdots |S(E)[p^\ell]|^{m_\ell}$ is equal to 
$\sum_{\mu \subseteq \la} C_{\lambda,\mu}(p^2)p^{|\mu|}$ as $E/\Q$, ordered by conductor, is varying over a family ${\mathcal F}$ of elliptic curves, and assuming that the even (resp. odd) rank elliptic curves in ${\mathcal F}$ contribute in a ratio $\alpha$ (resp. $1-\alpha$).
Then, for all $\mu_1\geq \mu_2 \cdots \geq \mu_\ell$, as $E/\Q$ is varying over ${\mathcal F}$, the probability that $\rk_{p^j}(S(E))=2\mu_j+\delta$ for all $1\leq j \leq \ell$ 
 is equal to 
$$
 \left(\delta(1-\alpha)+\alpha(1-\delta)\right) \frac{(1/p^{2\delta+2\mu_\ell+1};1/p^2)_\infty}{p^{2(\mu_1^2+\cdots + \mu_\ell^2)+(2\delta-1)(\mu_1+\cdots + \mu_\ell)} \prod_{j=1}^{\ell} (1/p^2;1/p^2)_{\mu_j-\mu_{j+1}}}.
$$
\end{proposition}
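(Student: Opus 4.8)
The plan is to follow the proof of Theorem~\ref{general_sha}, splitting the family ${\mathcal F}$ according to the parity of the Mordell--Weil rank. Since $S(E)\simeq(\Q_p/\Z_p)^{r(E)}\oplus\Sha(E)[p^\infty]$, every $p^j$-rank satisfies $\rk_{p^j}(S(E))\equiv r(E)\pmod 2$; so, writing $N(X,r)$ for the number of curves of conductor $\le X$ with profile $\rk_{p^i}(S(E))=r_i$ ($i=1,\dots,\ell$), one has $N(X,r)=0$ unless all the $r_i$ have a common parity. I would therefore record the two relevant families of profiles as $r=2\mu$ (from even-rank curves) and $r=2\mu+\mathbf 1$ (from odd-rank curves), with $\mu=\mu_1\ge\cdots\ge\mu_\ell\ge0$. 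Exactly as in~\eqref{eq_3}, and using $(\lambda'|2\mu+\mathbf 1)=2(\lambda'|\mu)+|\lambda'|$ together with $|\lambda'|=|\lambda|$, the moment hypothesis becomes
\[
\sum_\mu \frac{N(X,2\mu)}{N(X)}\,p^{2(\lambda'|\mu)}
+p^{|\lambda|}\sum_\mu \frac{N(X,2\mu+\mathbf 1)}{N(X)}\,p^{2(\lambda'|\mu)}
=\sum_{\nu\subseteq\lambda}C_{\lambda,\nu}(p^2)p^{|\nu|}+o_\lambda(1).
\]

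The algebraic input I would isolate next is the self-duality of the subgroup-counting polynomials: pairing a subgroup $H$ of a group of type $\lambda$ with its orthogonal complement (via $G\simeq\widehat G$) shows that $C_{\lambda,\nu}(q)$ also counts subgroups of cotype $\nu$, so $\sum_{\nu\subseteq\lambda}C_{\lambda,\nu}(q)x^{|\nu|}=\sum_{\nu\subseteq\lambda}C_{\lambda,\nu}(q)x^{|\lambda|-|\nu|}$; with $q=p^2$ and $x=p$ this reads $R(\lambda):=\sum_\nu C_{\lambda,\nu}(p^2)p^{|\nu|}=p^{|\lambda|}R^*(\lambda)$, where $R^*(\lambda):=\sum_\nu C_{\lambda,\nu}(p^2)p^{-|\nu|}$. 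Passing to limit points $a_\mu,b_\mu$ of $N(X,2\mu)/N(X)$ and $N(X,2\mu+\mathbf 1)/N(X)$, and using the density assumption to separate the even- and odd-rank contributions in the proportions $\alpha$ and $1-\alpha$, I expect the two pieces to satisfy $\sum_\mu a_\mu p^{2(\lambda'|\mu)}=\alpha R(\lambda)$ and $\sum_\mu b_\mu p^{2(\lambda'|\mu)}=(1-\alpha)R^*(\lambda)$, the factor $p^{|\lambda|}$ being exactly what the duality identity absorbs. These are precisely two instances of the system~\eqref{systeme_2}: the even one is the Tate--Shafarevich system with $u=0$ and the odd one that with $u=1$. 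Since their right-hand sides are $O(p^{(\lambda'|\lambda')/2+|\lambda|})$ by the estimate behind Proposition~\ref{estimation_2}, positivity forces $a_\mu$ and $b_\mu$ to decay, within each fixed parity class and in the variable $a=p^2$, like $(p^2)^{-(\mu|\mu)/2+\beta|\mu|}$ with $\beta<1/2$; hence Corollary~\ref{coro_2} applies in its first case and yields uniqueness with \emph{no} condition on the constant term. Comparing with the explicit solutions quoted in Theorem~\ref{general_sha} then gives $a_\mu=\alpha\,x^{(0)}_{2\mu}$ and $b_\mu=(1-\alpha)\,x^{(1)}_{2\mu}$, where $x^{(u)}$ denotes the $u$-solution, and these assemble into the asserted formula with prefactor $\delta(1-\alpha)+\alpha(1-\delta)$.

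The main obstacle I anticipate is legitimising the separation in the second paragraph: the single moment identity controls only the \emph{sum} of the even and odd contributions, and it is the density hypothesis—read as asserting that the even- and odd-rank curves contribute to the moments in proportions $\alpha$ and $1-\alpha$, in accordance with the rank-independence expected from~\cite{poonen-rains}—together with the duality identity that pins down each summand. The delicacy is that the \emph{mixed} generating function $\sum_r w_r\prod_i z_i^{r_i}$ (the difference of two candidate solutions) has coefficients decaying only like $p^{-(r|r)/2+|r|}$, i.e.\ at the borderline exponent $\beta=1$ for $a=p$, so applying Corollary~\ref{coro_2} to the unseparated system would require knowing the constant term $w_{0}$, namely the probability that $\rk_{p^j}(S(E))=0$ for all $j$, in advance. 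It is precisely the passage to each parity class in the variable $a=p^2$ that pushes the growth exponent below the critical value $1/2$ and removes this obstruction; thus the real work lies in justifying the parity split, the two concluding uniqueness arguments being then routine copies of the proof of Theorem~\ref{general_sha}.
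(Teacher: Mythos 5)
Your proposal is correct and follows essentially the same route as the paper's own proof: split the moment identity by rank parity using the ratio hypothesis, identify the even and odd contributions with the Tate--Shafarevich systems of type \eqref{systeme_2} for $u=0$ and $u=1$ by means of the identity $\sum_{\mu\subseteq\lambda}C_{\lambda,\mu}(p^2)p^{|\mu|}=p^{|\lambda|}\sum_{\mu\subseteq\lambda}C_{\lambda,\mu}(p^2)p^{-|\mu|}$, and conclude by the uniqueness argument of Corollary~\ref{coro_2} with $a=p^2$. The only cosmetic difference is that you justify this duality identity by pairing subgroups with their orthogonal complements, whereas the paper cites it as Theorem~1 of \cite{delaunay_jouhet}.
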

\begin{proof}
For $X\geq 1$ and $r=r_1\geq r_2 \geq \cdots \geq r_\ell \geq 0$,  set as before
\begin{eqnarray*}
N(X)  & := &|\{E \in {\mathcal F} \colon N_E\leq X\}|, \mbox{ and } \\
N(X,r)& := &|\{E \in {\mathcal F} \colon N_E \leq X \mbox{ and } \rk_{p^i}(S(E))=r_i\,\mbox{ for all } i=1,\dots,\ell \}|.
\end{eqnarray*}
Let $\lambda=1^{m_1}\cdots \ell^{m_\ell}$ be an integer partition. Since the $\rk_{p^j}(S(E))$'s have all the same parity for $j\in \N$,  and by the 
assumptions of the theorem, we have 
$$
\sum_r \frac{N(X,2r)}{N(X)}p^{(\lambda'|2r)} = \alpha \sum_{\mu\subseteq \lambda} C_{\lambda,\mu} p^{|\mu|} + o_{\lambda}(1), 
$$
and 
$$
\sum_r \frac{N(X,2r+1)}{N(X)} p^{(\lambda'|2r+1)} =(1-\alpha)\sum_{\mu\subseteq \lambda} C_{\lambda,\mu} p^{|\mu|} + o_{\lambda}(1).
$$
For $\delta \in \{0,1\}$, set
$$
e_{2r+\delta} = \frac{(1/p^{2\delta+2\mu_\ell+1};1/p^2)_\infty}{p^{2(\mu_1^2+\cdots + \mu_\ell^2)+(2\delta-1)(\mu_1+\cdots + \mu_\ell)} \prod_{j=1}^{\ell} (1/p^2;1/p^2)_{\mu_j-\mu_{j+1}}}.
$$
Thus for $\delta=0$, we recover $e_{2r}$ which was defined in the previous section, where we already saw that
$$
\sum_{r} \alpha e_{2r} p^{(\lambda'|2r)} = \alpha \sum_{\mu\subseteq \lambda} C_{\lambda,\mu}(p^2)p^{|\mu|},
$$
 that $\alpha e_{2r}$ is the only solution of the above system, and that $N(X,2r)/N(X) \rightarrow e_{2r}$ as $X\rightarrow \infty$.
~\\
Now, by the same arguments as before, we obtain that there exists an unique solution to the system
\begin{equation}\label{systeme_4}\tag{$\mathcal T$'}
\sum_r \left(x_{2r+1} p^{(\lambda'|2r+1)}\right) = (1-\alpha) \sum_{\mu\subseteq \lambda} C_{\lambda,\mu}(p^2)p^{|\mu|}, \mbox{ for all } \lambda=1^{m_1}\cdots \ell^{m_\ell},
\end{equation}
where the unknown are $x_{2r+1}$. Furthermore $x_{2r+1}=(1-\alpha)e_{2r+1}$ is a solution of~(\ref{systeme_4}). Indeed,  
by \cite[Remark after Theorem 14]{delaunay_jouhet}
\begin{eqnarray*}
\sum_{r} e_{2r+1} p^{(\lambda'|2r+1)} &=&  p^{|\lambda'|}\sum_{r} e_{2r+1} p^{(\lambda'|2r)} \\
&=&p^{|\lambda|}  \sum_{\mu\subseteq \lambda} C_{\lambda,\mu}(p^2)p^{-|\mu|},
\end{eqnarray*}
and moreover by \cite[Theorem 1]{delaunay_jouhet}, we have 
$$
p^{|\lambda|} \sum_{\mu\subseteq \lambda} C_{\lambda,\mu}(p^2)p^{-|\mu|} = \sum_{\mu\subseteq \lambda} C_{\lambda,\mu}(p^2)p^{|\mu|}.
$$
Finally $N(X,2r+1)/N(X) \rightarrow e_{2r+1}$ as $X\rightarrow \infty$.
\end{proof}
\medskip
~\\
Now, adapting the proof of Corollary \ref{coro1}, we have the following result.
\begin{corollary}\label{th_Selmer}
Let $\ell$ be a positive integer and set $\delta \in \{0,1\}$. Assume that for every $\lambda=1^{m_1}2^{m_2}\cdots \ell^{m_\ell}$ 
the average of $|S(E)[p]|^{m_1} |S(E)[p^2]|^{m_2} \cdots |S(E)[p^\ell]|^{m_\ell}$ is equal to 
$\sum_{\mu \subseteq \la} C_{\lambda,\mu}(p^2)p^{|\mu|}$, as $E/\Q$, ordered by conductor, is varying over a family ${\mathcal F}$ of elliptic curves, and assuming that the even (resp. odd) rank elliptic curves in ${\mathcal F}$ contribute in a ratio $\alpha$ (resp. $1-\alpha$).
Then, for $k\in \N$, the probability that $\rk_{p^\ell}(S(E))= 2k+\delta$ is equal to
$$
\left(\delta(1-\alpha)+\alpha(1-\delta)\right) \frac{(1/p^{2k+2\delta+1};1/p^2)_\infty}{(1/p^2;1/p^2)_k p^{\ell k(2k+2\delta-1)}} Q_{1/p^2,\ell,1}(1/p^{4k+2\delta-3}).
$$
\end{corollary}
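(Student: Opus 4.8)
The plan is to follow the proof of Corollary~\ref{coro1} almost verbatim, replacing the input of Theorem~\ref{general_clgp} by the joint law furnished by Proposition~\ref{selmer}. For $X\geq 1$, $\delta\in\{0,1\}$ and $k\in\N$, put $N(X):=|\{E\in{\mathcal F}\colon N_E\leq X\}|$ and let $N(X,\ell,2k+\delta)$ count those $E\in{\mathcal F}$ with $N_E\leq X$ and $\rk_{p^\ell}(S(E))=2k+\delta$. Because all the $\rk_{p^j}(S(E))$ share the common parity of $r(E)$, as recalled at the beginning of this section, the condition $\rk_{p^\ell}(S(E))=2k+\delta$ forces $\rk_{p^j}(S(E))=2\mu_j+\delta$ for a unique partition $\mu_1\geq\cdots\geq\mu_{\ell-1}\geq\mu_\ell=k$. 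Hence
$$
\frac{N(X,\ell,2k+\delta)}{N(X)}=\sum_{\mu_1\geq\cdots\geq\mu_{\ell-1}\geq k}\frac{N(X,2\mu+\delta)}{N(X)},
$$
where $N(X,2\mu+\delta)$ counts the curves realizing the full tuple of ranks $2\mu_j+\delta$.

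Next I would pass to the limit $X\to\infty$ inside the sum. By Proposition~\ref{selmer} each summand $N(X,2\mu+\delta)/N(X)$ tends to $(\delta(1-\alpha)+\alpha(1-\delta))\,e_{2\mu+\delta}$. To justify interchanging the limit with the infinite sum I would invoke Lebesgue's Dominated Convergence Theorem, the dominating bound being the exact analogue of \eqref{inegalite_sha}: replacing $m_i$ by $2m_i+1$ in the moment hypothesis and using Proposition~\ref{estimation_2} with $u=0$ (so that $\sum_{\mu\subseteq\lambda}C_{\lambda,\mu}(p^2)p^{|\mu|}=O_{p,\ell}(p^{(\lambda'|\lambda')})$) yields a summable majorant uniform in $X$ and in $\mu$.

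It then remains to evaluate $(\delta(1-\alpha)+\alpha(1-\delta))\sum_{\mu_1\geq\cdots\geq\mu_{\ell-1}\geq k}e_{2\mu+\delta}$ in closed form. I would factor out the quantities depending only on $\mu_\ell=k$, namely $(1/p^{2k+2\delta+1};1/p^2)_\infty$, $(1/p^2;1/p^2)_k$ and $p^{\ell k(2k+2\delta-1)}$, and then substitute $\nu_j:=\mu_j-k$ for $1\leq j\leq\ell-1$. The exponent $2(\mu_1^2+\cdots+\mu_\ell^2)+(2\delta-1)(\mu_1+\cdots+\mu_\ell)$ then splits off the closed contribution $\ell k(2k+2\delta-1)$, and the residual sum over $\nu_1\geq\cdots\geq\nu_{\ell-1}\geq0$ is precisely the Andrews-type series $\sum q^{\nu_1^2+\cdots+\nu_{\ell-1}^2+s(\nu_1+\cdots+\nu_{\ell-1})}/\prod_{j}(q;q)_{\nu_j-\nu_{j+1}}$ with $q=1/p^2$ and $s=2k+\delta-1/2$. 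By \cite[Proposition 13]{delaunay3} (equivalently \cite[Eq. (2.5)]{andrews}) this equals $Q_{1/p^2,\ell,1}(q^{s-1})=Q_{1/p^2,\ell,1}(1/p^{4k+2\delta-3})$, which reassembles into the stated formula.

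The argument is entirely parallel to that of Corollary~\ref{coro1}; the only genuinely new bookkeeping is the parity shift $2\mu+\delta$, which forces the base of the $q$-series to be $p^2$ and introduces the half-integer shift $s=2k+\delta-1/2$. I expect this tracking of the shifted exponents through the substitution $\nu_j=\mu_j-k$, rather than any analytic difficulty, to be the only point requiring care, since the existence and uniqueness of the limiting densities have already been secured in Proposition~\ref{selmer} via Corollary~\ref{coro_2}.
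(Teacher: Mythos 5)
Your proposal is correct and takes essentially the same route as the paper, whose proof of Corollary~\ref{th_Selmer} is precisely the adaptation of Corollary~\ref{coro1} that you carry out: decompose the event over the joint rank vectors $2\mu+\delta$, pass to the limit via Proposition~\ref{selmer} and dominated convergence (with the majorant coming from the moment hypothesis and Proposition~\ref{estimation_2} at $u=0$), then evaluate the shifted sum by the Andrews identity with $q=1/p^2$. The half-integer exponent $s=2k+\delta-1/2$ you flag is harmless, since the identity is one in the variable $x=q^{s-1}=1/p^{4k+2\delta-3}$, exactly as the paper already uses it in the Tate--Shafarevich case.
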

\noindent
\medskip
~\\
The value of $\alpha$ can be of course $\neq 1/2$. Furthermore, even in the case of a family of quadratic twists of an elliptic curve $E$ 
defined over a number field $K$, it is possible to have $\alpha\neq 1/2$ (see \cite{klagsbrun_all_1}, in that case we have $K\neq \Q$).~\\
If we consider the family of all elliptic curves, then a general conjecture states that $\alpha=1/2$, which leads to the following.
\begin{conjecture} Let $\ell$ be a positive integer, set $k\in \N$ and $\delta \in \{0,1\}$. Then, as $E/\Q$, ordered by conductor, is varying over all 
elliptic curves, the probability that $\rk_{p^\ell}(S(E))= 2k+\delta$ is equal to
$$ 
f(p,\ell,2k+\delta) : = \frac{1}{2}\frac{(1/p^{2k+2\delta+1};1/p^2)_\infty}{(1/p^2;1/p^2)_k p^{\ell k(2k+2\delta-1)}} Q_{1/p^2,\ell,1}(1/p^{4k+2\delta-3}).
$$
\end{conjecture}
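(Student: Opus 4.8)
The plan is to imitate the derivation of Corollary~\ref{coro1} verbatim, only replacing the base $p$ by $p^2$ and reading off the marginal law of a single $p^\ell$-rank from the joint law already established in Proposition~\ref{selmer}. First I would fix $\delta\in\{0,1\}$ and $k\in\N$, define $N(X)$ and $N(X,r)$ exactly as in the proof of Proposition~\ref{selmer}, and introduce
$$
N(X,\ell,2k+\delta)=|\{E\in{\mathcal F}\colon N_E\leq X\text{ and }\rk_{p^\ell}(S(E))=2k+\delta\}|.
$$
The structural input that makes everything work is that all the ranks $\rk_{p^j}(S(E))$ share the common parity of $r(E)$; hence on the event $\rk_{p^\ell}(S(E))=2k+\delta$ every $\rk_{p^j}(S(E))$ is congruent to $\delta$ modulo $2$, so I may write $\rk_{p^j}(S(E))=2\mu_j+\delta$ for a decreasing partition $\mu$ with $\mu_\ell=k$. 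This yields the finite-level identity
$$
\frac{N(X,\ell,2k+\delta)}{N(X)}=\sum_{\mu_1\geq\cdots\geq\mu_{\ell-1}\geq\mu_\ell=k}\frac{N(X,2\mu+\delta)}{N(X)},
$$
which is the exact analogue of equation~\eqref{inter}.

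Next I would pass to the limit $X\to\infty$. Proposition~\ref{selmer} supplies the limit of each joint term $N(X,2\mu+\delta)/N(X)$, namely $(\delta(1-\alpha)+\alpha(1-\delta))\,e_{2\mu+\delta}$ in the notation introduced there, while the uniform majorization analogous to~\eqref{inegalite_sha} (obtained by substituting $m_i\mapsto 2m_i+1$ in the moment hypothesis and invoking the bound of Proposition~\ref{estimation_2}, whose proof applies verbatim to $\sum_{\mu\subseteq\lambda}C_{\lambda,\mu}(p^2)p^{|\mu|}$ after the trivial estimate $p^{|\mu|}\leq p^{|\lambda|}$) legitimizes Lebesgue's dominated convergence theorem for the interchange of limit and summation. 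The outcome is that the sought probability equals
$$
(\delta(1-\alpha)+\alpha(1-\delta))\sum_{\mu_1\geq\cdots\geq\mu_{\ell-1}\geq\mu_\ell=k}\frac{(1/p^{2\delta+2\mu_\ell+1};1/p^2)_\infty}{p^{2(\mu_1^2+\cdots+\mu_\ell^2)+(2\delta-1)(\mu_1+\cdots+\mu_\ell)}\prod_{j=1}^{\ell}(1/p^2;1/p^2)_{\mu_j-\mu_{j+1}}}.
$$

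The remaining step is purely combinatorial and is where the change of base is genuinely felt. I would isolate the $\mu_\ell=k$ factor, pulling out $(1/p^{2k+2\delta+1};1/p^2)_\infty/(1/p^2;1/p^2)_k$, and then shift every index by $k$ by setting $\nu_j:=\mu_j-k$ for $1\leq j\leq\ell-1$. A direct bookkeeping of the quadratic and linear exponents gives $\sum_j\mu_j^2=\sum_j\nu_j^2+2k\sum_j\nu_j+\ell k^2$ and $\sum_j\mu_j=\sum_j\nu_j+\ell k$, which produces the constant factor $p^{-\ell k(2k+2\delta-1)}$ together with a residual sum over $\nu_1\geq\cdots\geq\nu_{\ell-1}\geq0$ whose summand has the shape $q^{\sum\nu_j^2}x^{\sum\nu_j}/\prod_j(q;q)_{\nu_j-\nu_{j+1}}$ with base $q=1/p^2$ and $x=q^{(4k+2\delta-1)/2}$. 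Applying the Andrews evaluation (\cite[Proposition 13]{delaunay3} or \cite[Eq.~(2.5)]{andrews}) with these parameters, exactly as in Corollary~\ref{coro1}, identifies this residual sum with $Q_{1/p^2,\ell,1}(x/q)=Q_{1/p^2,\ell,1}(1/p^{4k+2\delta-3})$, whence the stated formula follows.

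The step demanding the most care is not any single estimate but the parity bookkeeping at the outset: one must verify that conditioning on $\rk_{p^\ell}(S(E))=2k+\delta$ forces the uniform representation $r_j=2\mu_j+\delta$ for \emph{all} $j$, so that the marginal decomposes over partitions with fixed last part drawn from Proposition~\ref{selmer} rather than from the even-only law of Theorem~\ref{general_sha}. The half-integer exponent $x=q^{(4k+2\delta-1)/2}$ is the sole arithmetic place where passing from base $p$ to base $p^2$ intervenes, and checking $x/q=1/p^{4k+2\delta-3}$ is the quickest confirmation that the Andrews substitution has been carried out correctly; the specialization $\alpha=1/2$ then returns the prefactor $1/2$ of the final conjecture.
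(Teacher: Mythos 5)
Your proposal is correct and follows essentially the same route as the paper: the paper obtains this conjecture by specializing Corollary~\ref{th_Selmer} (itself derived from Proposition~\ref{selmer} by ``adapting the proof of Corollary~\ref{coro1}'') to the rank-conjecture value $\alpha=1/2$, which is precisely your chain of steps --- parity-forced decomposition of the marginal event over partitions with $\mu_\ell=k$, dominated convergence via the Proposition~\ref{estimation_2}-type bound, the index shift by $k$, and the Andrews evaluation giving $Q_{1/p^2,\ell,1}(1/p^{4k+2\delta-3})$. Your exponent bookkeeping (in particular $x/q=1/p^{4k+2\delta-3}$ with $q=1/p^2$) and the prefactor $\delta(1-\alpha)+\alpha(1-\delta)=1/2$ at $\alpha=1/2$ both check out.
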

\noindent
For $\ell=1$, we recover the conjectural distribution $X_{\sel_p}$ of \cite{poonen-rains} and the proved distribution of $\sel_2$ in 
 \cite{heath-brown, heath-brown2, kane, swinnerton-dyer, klagsbrun_all_2} for some families of quadratic twists of an elliptic curve. \medskip
~\\
The conjectural distribution on the $p^j$-rank of the Selmer groups given above is of course compatible with the rank conjecture. Indeed, note that we have $Q_{q, \infty,1}(x)= 1/(xq^2;q)_\infty$ from which we easily deduce that  
$$
\lim_{\ell \rightarrow \infty}  f(p,\ell,2k+\delta) = \left\{ \begin{array}{cc} 0 & \mbox{ if } k\geq 1, \\
											   1/2 & \mbox{ if } k=0. \end{array} \right. 
$$
Since for $\ell$ large enough, $\rk_{p^\ell}(S(E))=r(E)$, we recover the fact that on average half elliptic curves should have rank 0 and half 
elliptic curves should have rank 1.
On the other hand, if we assume Conjecture \ref{conj_selmer} for $\ell=1$ for infinitely many primes $p$ with $\alpha=1/2$, then we 
also recover the previous distribution for the rank of $E(\Q)$, since 
$$
\lim_{p \rightarrow \infty}  f(p,1,2k+\delta) = \left\{ \begin{array}{cc} 0 & \mbox{ if } k\geq 1, \\																							   1/2 & \mbox{ if } k=0. \end{array} \right. 
$$
\medskip
~\\
We give some numerical approximations for the function $f(p,\ell,2k+\delta)$ for $p=2, 3, 5$ and for small values of $\ell$ and of $2k+\delta$ 
in the following tables.
~\\
\begin{small}
\begin{center}
\begin{tabular}{cc}
\begin{tabular}{|c|c|c|c|}
\hline
$2k+\delta \backslash \ell  $ & 1 & 2 & 3 \\
\hline
0            & 0.2097 & 0.3541 & 0.4271 \\
\hline
1           & 0.4194 & 0.4899 & 0.4987 \\
\hline
2           & 0.2796 & 0.1456 & 0.0729 \\
\hline
3           & 0.0798 & 0.1009 & 0.0012\\
\hline
\end{tabular} &
\begin{tabular}{|c|c|c|c|}
\hline
$2k+\delta \backslash \ell  $ & 1 & 2 & 3 \\
\hline
0            & 0.3195 & 0.4398 & 0.4799 \\
\hline
1           & 0.4792 & 0.4992 & 0.4999 \\
\hline
2           & 0.1797 & 0.0601 & 0.0201 \\
\hline
3           & 0.0207 & 0.0007 & $2\cdot 10^{-5}$ \\
\hline
\end{tabular} \\
$p=2$ & $p=3$ 
\end{tabular}
\bigskip \bigskip
~\\
\begin{tabular}{cc}
\begin{tabular}{|c|c|c|c|}
\hline
$2k+\delta \backslash \ell  $ & 1 & 2 & 3 \\
\hline
0            & 0.3966 & 0.4793 & 0.4959 \\
\hline
1           & 0.4958 & 0.4999 & 0.4999 \\
\hline
2           & 0.1033 & 0.0207 & 0.0041 \\
\hline
3           & 0.0042 & $3\cdot 10^{-5}$ & $2\cdot 10^{-7}$\\
\hline
\end{tabular}
\\
$p=5$
\end{tabular}
\end{center}
\end{small}
\section{Remark on the uniqueness of the solution}
In our study related to Tate-Shafarevich group, we were led to consider and to discuss the unicity of the solution of 
the following infinite multi-dimensional system 
\begin{equation}\label{systeme_3}\tag{$\mathcal U$}
 \sum_r x_{r} p^{(\lambda|r)} =  \sum_{\mu\subseteq \lambda'} C_{\lambda',\mu}(p^2)p^{-|\mu|(2u-1)} \mbox{ for all } \lambda=1^{m_1}\cdots \ell^{m_\ell},
\end{equation}
where the unknowns are $x_{r}\geq 0$. We only considered solution $(x_r)_r$ such that $x_r=0$ if in $r=r_1\geq r_2 \geq \cdots \geq r_\ell$ at least 
one of the $r_j$'s has not the same parity as $r_1$. In that case, the term $p^{(\lambda|r)}$  involved in the sum is of the form 
$p^{(\lambda|2r+\delta)}$, and the factor 2 allowed to have an asymptotic $0\leq x_{2r+\delta} \ll p^{(-r|r)/2}$ which implied the unicity of the solution. 
One can ask about the unicity of the solution without the assumption that the partitions involved in the system have parts with the same parity.\medskip
~\\
Set $\mu \in \R$, and for a partition $r$ we define
$$
y_r(\mu)=\left\{ \begin{array}{cc} \mu e_{r} & \mbox{ if $r$ is even,}  \\ (1-\mu)e_{r} & \mbox{ if $r$ is odd,} \\ 0 &\mbox{otherwise.}  \end{array}\right.
$$
Then $y_r(\mu)$ is a solution of equation \eqref{systeme_3}. If $0\leq x_r$ is a solution of \eqref{systeme_3} then, using ~\eqref{proba_best}, 
we see that for any fixed $\alpha>1$, we have
$$
0\leq x_r \ll p^{-(r|r)/2+\alpha |r|} 
$$
for all $r$. Now, if we let $w_r=x_r- y_r(x_{0,0,\dots,0})$ then  it is easy to see that we also have 
$$
|w_r| \ll p^{-(r|r)/2+\alpha |r|}
$$
($w_r$ is not necessarily nonnegative), and the function $g(\underline{z})= \sum_r w_r z_1^{r_1}\cdots z_\ell^{r_\ell}$ satisfies the hypothesis 
of Corollary~\ref{coro_2} with $\alpha \in ]1,3/2[$ and with $w_{0,0,\dots,0}=0$. Hence $w_r=0$ for all $r$ and $x_r= y_r(x_{0,0,\dots,0})$. We  deduce the following proposition.
\begin{proposition} If $0\leq x_r$ is a solution of equation \eqref{systeme_3} then $x_r=y_r(\mu)$ for some $\mu$. In particular, 
$x_r=0$ if $r$ is not an even nor an odd partition.
\end{proposition}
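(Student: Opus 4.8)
The plan is to read the statement as a uniqueness assertion for the homogeneous version of \eqref{systeme_3} and to feed it into Corollary~\ref{coro_2} with $a=p$. Since $y_r(\mu)$ is already known to solve \eqref{systeme_3} for every real $\mu$, the difference of a given solution $(x_r)_r$ with any $y_r(\mu)$ annihilates the right-hand side; the task is then to choose $\mu$ so that this difference meets the two hypotheses of Corollary~\ref{coro_2} and is thereby forced to vanish identically.

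First I would record the a priori decay of a nonnegative solution. The right-hand side of \eqref{systeme_3} equals $\sum_{\mu\subseteq\lambda'}C_{\lambda',\mu}(p^2)p^{-|\mu|(2u-1)}$, and applying \eqref{proba_best} with $\lambda$ replaced by $\lambda'$ (so that $(\lambda')'=\lambda$ and $|\lambda'|=|\lambda|$) bounds it by $O_{p,\ell}(p^{(\lambda|\lambda)/2+|\lambda|}|\lambda|^\ell)$. Specializing the equation to $\lambda=r$ and keeping only the term $x_r p^{(r|r)}$ — legitimate because every $x_{r'}\geq0$ — yields $0\leq x_r\ll p^{-(r|r)/2+\alpha|r|}$ for any fixed $\alpha>1$, the polynomial factor $|r|^\ell$ being absorbed into $p^{(\alpha-1)|r|}$. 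The explicit product formula for $e_r$ gives it the same decay, hence so does every $y_r(\mu)$.

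Next I would pin down $\mu$ through the constant term. For any $\mu$ the sequence $w_r:=x_r-y_r(\mu)$ solves the homogeneous system $\sum_r w_r p^{(\lambda|r)}=0$ for all $\lambda$, and inherits the bound $|w_r|\ll p^{-(r|r)/2+\alpha|r|}$ with $\alpha$ chosen in $]1,3/2[$. The zero partition is even, so $y_{0,\dots,0}(\mu)=\mu\,e_{0,\dots,0}$ is linear in $\mu$ with $e_{0,\dots,0}=(1/p^{2u+1};1/p^2)_\infty\neq0$; there is therefore a unique $\mu$ for which $w_{0,\dots,0}=x_{0,\dots,0}-\mu\,e_{0,\dots,0}=0$. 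With this choice the associated function $g(\underline z)=\sum_r w_r z_1^{r_1}\cdots z_\ell^{r_\ell}$ vanishes at all points $(p^{m_1},\dots,p^{m_\ell})$ and satisfies both hypotheses of Corollary~\ref{coro_2} (growth with $\alpha\in[1/2,3/2[$ together with $w_{0,\dots,0}=0$), so $g\equiv0$. Hence $x_r=y_r(\mu)$ for all $r$, and in particular $x_r=0$ whenever $r$ is neither an even nor an odd partition.

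The step I expect to be the genuine obstacle is not this uniqueness argument, which is a direct application of Corollary~\ref{coro_2}, but the input invoked at the start: that $y_r(\mu)$ really is a solution of \eqref{systeme_3} for every $\mu$. This amounts to showing that the even-supported family $(e_{2\rho})_\rho$ and the odd-supported family $(e_{2\rho+1})_\rho$ satisfy \eqref{systeme_3} with one and the same right-hand side — the former being Theorem~\ref{general_sha}, the latter resting on the $q$-series identities of \cite{delaunay_jouhet}. Within the uniqueness step itself the only delicate point is the choice of $\mu$: because $\alpha$ falls in $[1/2,3/2[$ and not below $1/2$, Corollary~\ref{coro_2} is inapplicable unless the constant term is annihilated first, which is precisely where the nonvanishing of $e_{0,\dots,0}$ enters.
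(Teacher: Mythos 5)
Your proof is correct and follows essentially the same route as the paper's: the a priori decay $0\leq x_r\ll p^{-(r|r)/2+\alpha|r|}$ obtained from \eqref{proba_best} by specializing the system at $\lambda=r$, subtraction of $y_r(\mu)$ with $\mu$ chosen to kill the constant term, and then Corollary~\ref{coro_2} with $\alpha\in\,]1,3/2[$ and $w_{0,\dots,0}=0$, the solvability by $y_r(\mu)$ itself being imported from the earlier sections exactly as the paper does. Your handling of the constant term is in fact slightly more careful than the paper's, which sets $w_r=x_r-y_r(x_{0,\dots,0})$ and asserts $w_{0,\dots,0}=0$ --- literally valid only if $e_{0,\dots,0}=1$, whereas $e_{0,\dots,0}=(1/p^{2u+1};1/p^2)_\infty$, so the correct normalization is your $\mu=x_{0,\dots,0}/e_{0,\dots,0}$.
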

~\\
It would be interesting to study the (unicity of the) solutions of equation \eqref{systeme_3} if we do not assume that $x_r \geq 0$. 
%

%
%

\bibliographystyle{alpha}
\bibliography{note}

\begin{thebibliography}{KMR13b}

\bibitem[And74]{andrews}
George~E. Andrews.
\newblock An analytic generalization of the {R}ogers-{R}amanujan identities for
  odd moduli.
\newblock {\em Proc. Nat. Acad. Sci. U.S.A.}, 71:4082--4085, 1974.

\bibitem[BS10a]{bhargava_shankar_2}
Manjul Bhargava and Arul Shankar.
\newblock Binary quartic forms having bounded invariants, and the boundedness
  of the average rank of elliptic curves.
\newblock {\em Preprint available on arXiv:1006.1002}, 2010.

\bibitem[BS10b]{bhargava_shankar_1}
Manjul Bhargava and Arul Shankar.
\newblock Ternary cubic forms having bounded invariants, and the existence of a
  positive proportion of elliptic curves having rank 0.
\newblock {\em Preprint available on arXiv:1007.0052}, 2010.

\bibitem[CL84]{cohen-lenstra}
H.~Cohen and H.~W. Lenstra, Jr.
\newblock Heuristics on class groups.
\newblock In {\em Number theory ({N}ew {Y}ork, 1982)}, volume 1052 of {\em
  Lecture Notes in Math.}, pages 26--36. Springer, Berlin, 1984.

\bibitem[Coh85]{cohen3}
Henri Cohen.
\newblock On the {$p^k$}-rank of finite abelian groups and {A}ndrews'
  generalizations of the {R}ogers-{R}amanujan identities.
\newblock {\em Nederl. Akad. Wetensch. Indag. Math.}, 47(4):377--383, 1985.

\bibitem[Del01]{delaunay1}
Christophe Delaunay.
\newblock Heuristics on {T}ate-{S}hafarevitch groups of elliptic curves defined
  over {$\Bbb Q$}.
\newblock {\em Experiment. Math.}, 10(2):191--196, 2001.

\bibitem[Del07]{delaunay2}
Christophe Delaunay.
\newblock Heuristics on class groups and on {T}ate-{S}hafarevich groups: the
  magic of the {C}ohen-{L}enstra heuristics.
\newblock In {\em Ranks of elliptic curves and random matrix theory}, volume
  341 of {\em London Math. Soc. Lecture Note Ser.}, pages 323--340. Cambridge
  Univ. Press, Cambridge, 2007.

\bibitem[Del11]{delaunay3}
Christophe Delaunay.
\newblock Averages of groups involving $p^l$-rank and combinatorial identities.
\newblock {\em Journal of Number Theory}, 131:536--551, 2011.

\bibitem[DH71]{davenport_heilbronn}
H.~Davenport and H.~Heilbronn.
\newblock On the density of discriminants of cubic fields. {II}.
\newblock {\em Proc. Roy. Soc. London Ser. A}, 322(1551):405--420, 1971.

\bibitem[DJ12]{delaunay_jouhet}
Christophe Delaunay and Fr\'ed\'eric Jouhet.
\newblock $p^\ell$-torsion points in finite abelian groups and combinatorial
  identities.
\newblock {\em Preprint available on arXiv:1208.6397}, 2012.

\bibitem[FK06]{kluners_fouvry2}
{\'E}tienne Fouvry and J{\"u}rgen Kl{\"u}ners.
\newblock Cohen-{L}enstra heuristics of quadratic number fields.
\newblock In {\em Algorithmic number theory}, volume 4076 of {\em Lecture Notes
  in Comput. Sci.}, pages 40--55. Springer, Berlin, 2006.

\bibitem[FK07]{kluners_fouvry}
{\'E}tienne Fouvry and J{\"u}rgen Kl{\"u}ners.
\newblock On the $4$-rank of class groups of quadratic number fields.
\newblock {\em Invent. Math.}, 167(3):455--513, 2007.

\bibitem[Ger84]{gerth_1}
Frank Gerth, III.
\newblock The {$4$}-class ranks of quadratic fields.
\newblock {\em Invent. Math.}, 77(3):489--515, 1984.

\bibitem[Ger87]{gerth_2}
Frank Gerth, III.
\newblock Extension of conjectures of {C}ohen and {L}enstra.
\newblock {\em Exposition. Math.}, 5(2):181--184, 1987.

\bibitem[HB93]{heath-brown}
D.~R. Heath-Brown.
\newblock The size of {S}elmer groups for the congruent number problem.
\newblock {\em Invent. Math.}, 111(1):171--195, 1993.

\bibitem[HB94]{heath-brown2}
D.~R. Heath-Brown.
\newblock The size of {S}elmer groups for the congruent number problem. {II}.
\newblock {\em Invent. Math.}, 118(2):331--370, 1994.
\newblock With an appendix by P. Monsky.

\bibitem[Kan11]{kane}
Daniel~M. Kane.
\newblock On the {R}anks of the $2$-{S}elmer {G}roups of {T}wists of a {G}iven
  {E}lliptic {C}urve.
\newblock {\em Preprint available on arXiv: 1009.1365}, 2011.

\bibitem[KMR13a]{klagsbrun_all_1}
Zev Klagsbrun, Barry Mazur, and Karl Rubin.
\newblock Disparity in selmer ranks of quadratic twists of elliptic curves.
\newblock {\em Preprint available on arXiv:1111.2321}, 2013.

\bibitem[KMR13b]{klagsbrun_all_2}
Zev Klagsbrun, Barry Mazur, and Karl Rubin.
\newblock A markov model for selmer ranks in families of twists.
\newblock {\em Preprint available on arXiv:1303.6507}, 2013.

\bibitem[PR12]{poonen-rains}
Bjorn Poonen and Eric Rains.
\newblock Random maximal isotropic subspaces and {S}elmer groups.
\newblock {\em J. Amer. Math. Soc.}, 25(1):245--269, 2012.

\bibitem[SD08]{swinnerton-dyer}
Peter Swinnerton-Dyer.
\newblock The effect of twisting on the $2$-{S}elmer group.
\newblock {\em Math. Proc. Cambridge Philos. Soc.}, 145(3):513--526, 2008.

\end{thebibliography}

\end{document}